\newtheorem{theorem}{Theorem}[section]
\newtheorem{lemma}[theorem]{Lemma}
\newtheorem{proposition}[theorem]{Proposition}
\theoremstyle{definition}
\newtheorem{definition}[theorem]{Definition}
\theoremstyle{remark}
\newtheorem{remark}[theorem]{Remark}
\numberwithin{equation}{section}
\begin{document}

%\title[spherical reflection of minimal surfaces]
\title[reflection of free boundary minimal surfaces]
%{The spherical reflection of minimal surfaces and the critical catenoid}
{Free boundary minimal surfaces and the reflection principle}
%[the reflection across a sphere and the critical catenoid]
%{the reflection of a free boundary minimal surface in a ball}
%{the reflection across a sphere and the critical catenoid}%A Reflection principle for the free boundary minimal surfaces in a ball}
\author[ J. CHOE]{JAIGYOUNG CHOE}
%\date{(arXiv) April 19, 2021.\,\,\,\,\,(Revised) April 26, 2021}
\thanks{Supported in part by NRF-RS-2023-00246133}

\address{Korea Institute for Advanced Study, Seoul, 02455, Korea}
\email{choe@kias.re.kr}

\begin{abstract} We show that a minimal surface meeting a sphere at a $90^\circ$ angle can be reflected across the sphere. Using this reflection,  we prove the uniqueness that every embedded free boundary minimal annulus in a ball is necessarily the critical catenoid.
\\

\noindent{\it Keywords}: reflection principle, minimal surface, free boundary, critical catenoid\\
{\it MSC}\,: 53A10, 49Q05
\end{abstract}

\maketitle

\section{introduction}
In 1873, Schwarz \cite{Sc} found a way to extend the domain of the definition of a complex analytic function. It states that if an analytic function $f(z)$ is defined in the upper half-plane, extends to a continuous function on the real axis, and takes on real values on the real axis, then one can extend it to an analytic function in the whole plane by the formula
$$f(\overline{z})=\overline{f(z)}.$$
What is remarkable in this extension is that the resulting function must also be analytic along the real axis, even though one assumes no differentiability there.

The reflection principle can be used to reflect a harmonic function $h(x,y)$ defined in the upper half-plane, which continuously extends to the zero value on the $x$-axis. The extension of $h$ to the lower half-plane is based on the rule $$ h (x,-y) = -h (x, y).$$

Schwarz's reflection principle  has a natural generalization for minimal surfaces as well: If a minimal surface $\Sigma$ contains a line segment $\ell$ on its boundary, then $\Sigma$ can be analytically extended across $\ell$ by rotating $\Sigma$ about $\ell$ by $180^\circ$. Moreover, if $\Sigma$ is perpendicular to a plane $\Pi$ along $\partial\Sigma\cap\Pi$, one can extend $\Sigma$
across $\partial\Sigma\cap\Pi$ by taking its mirror image over $\Pi$.

In \cite{C} the author extended Schwarz's reflection: If $\Sigma$ meets $\Pi$ along $\partial\Sigma\cap\Pi$ at a constant contact angle$(\neq90^\circ)$, then $\Sigma$ has an analytic reflection $\Sigma^*$ across $\Pi$, so that $\Sigma\cup\Sigma^*$ is minimal.

In this paper we further generalize the reflection principle: The sphere can become a mirror like the plane. To be precise, one can reflect a minimal surface  across a sphere if it is perpendicular to the sphere along its boundary.

This new reflection principle will be pivotal in studying free boundary minimal surfaces in a ball. Nitsche proved that every free boundary minimal disk in a ball must be a flat equatorial disk \cite{N}. Nitsche's result has led many mathematicians, including Nitsche himself and Fraser-Li \cite{FN}, to conceive the conjecture that every embedded free boundary minimal annulus in a ball should be the critical catenoid, the part of the catenoid in a ball perpendicular to the boundary sphere. In this conjecture, the hypotheses of embeddedness and $90^\circ$-contact-angle are indispensable, as Fern\'{a}ndez-Hauswirth-Mira have recently constructed immersed free boundary minimal annuli and embedded minimal annuli with non-orthogonal contact angle in a ball \cite{FHM}. Recently, there have been partial answers to the conjecture: \cite{KM}, \cite{LY}, \cite{M}, \cite{S}. In this paper, we use the reflection principle to prove this conjecture in the affirmative as follows.

Solve the Cauchy problem for the Laplacian to choose specific isothermal coordinates. Use these coordinates to transform the Steklov condition into the Schwarz condition and establish the reflection principle for the free boundary minimal surfaces in a ball. Reflect the free boundary minimal annulus $\Sigma$ across its two boundaries alternatingly and infinitely many times. Then, one can get a complete minimal surface with two ends $\widetilde{\Sigma}$. If $\Sigma$ is embedded, $\widetilde{\Sigma}$ has a total curvature of $-4\pi$. Therefore $\widetilde{\Sigma}$ must be the catenoid, and $\Sigma$ the critical catenoid.

{\it Acknowledgments.} The author thanks Pablo Mira for helpful discussions on the Cauchy problem.

\section{Cauchy problem}
Given a  minimal surface $\Sigma$ in $\mathbb R^3$, one can introduce isothermal coordinates $x,y$ to express the metric of $\Sigma$  as
$$ds^2=F(x,y)^2(dx^2+dy^2).$$ 
Let $\Psi$
be the conformal harmonic map $\Psi$ from $D\subset\mathbb R^2$ onto $\Sigma$ that pushes forward the Euclidean coordinates of $\mathbb R^2$ to the isothermal coordinates $x,y$ on $\Sigma$. Suppose $\Sigma$ is simply connected or doubly connected, i.e., annular. So let $D=\{(x,y):x^2+y^2<1,\,0<y\}$ and $\delta=\{(x,0):-1<x<1\}$, or let $D$ be an infinite strip $\mathbb R\times (0,a)$ with $\delta:=\mathbb R\times\{0\}$, and $\Psi$ a periodic map.  This paper concerns the case where $\Sigma$ has a free boundary $\gamma\subset\partial\Sigma$ in the unit sphere $\mathbb S^2$. One can choose $\Psi$ so that $\gamma=\Psi(\delta)$.  By Lewy's regularity theorem \cite{L}, $\gamma$ is an analytic curve and $F(x,0)$ is an analytic positive function on $\gamma$. 

To establish the reflection principle for $\Sigma$, we need specific isothermal coordinates $x,y$, which satisfies $F(x,y)=1$ along the free boundary $\gamma$. To find such $x,y$, we first need to solve the Cauchy problem for the Laplacian.
\begin{enumerate}
\item[] {\bf Cauchy problem}. Given analytic functions $g(x)$ and $f(x)$ on the free boundary $\gamma\subset\partial\Sigma\subset\mathbb S^2$ of the minimal surface  $\Sigma\subset\mathbb R^3$, find a harmonic function $h(x,y)$ on $\Sigma$ satisfying
\begin{equation*}\label{cc}
h=g\,\,\,\, {\rm and}\,\,\,\, \frac{\partial h}{\partial \nu}=f\,\,\,\, {\rm along}\,\,\,\,\gamma,
\end{equation*}
where $\nu$ is the inward unit conormal to $\gamma$ on $\Sigma$.
\end{enumerate}
It is only known that the Cauchy problem for the Laplacian in $\Sigma$ is solvable in a local neighborhood of $\gamma$ \cite{HV}. Moreover, it is ill-posed because even a slight variation in the initial data along $\gamma$ can significantly change the solution function. Therefore we solve the problem only for the simplest case: 
$$g\equiv0,\, \,\,{\rm and}\,\,\,f\equiv1 \,\,\,{\rm along\,}\,\,\gamma$$
on $\Sigma$ in $\mathbb R^3$.

\begin{lemma}\label{sub}
Let's define $\Sigma,x,y,F(x,y),\Psi,D$, $\delta$, $\gamma$ and $\nu$ as above.
Then there exists a superharmonic function $k$ on $\Sigma$ satisfying the Cauchy conditions
\begin{equation}\label{cc1}
k=0,\,\,\,\,\frac{\partial k}{\partial \nu}=1\,\,\,{\rm along}\,\,\,\gamma.
\end{equation}
\end{lemma}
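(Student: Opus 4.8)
The plan is to write down an explicit function rather than to solve the Cauchy problem abstractly; the fact that the Lemma asks only for a \emph{superharmonic} $k$ (not a harmonic one) strongly suggests that the squared distance to the origin will do the job. Concretely, I would set
\[
k = \tfrac{1}{2}\left(1 - |X|^2\right),
\]
where $X$ denotes the position vector in $\mathbb{R}^3$ of a point of $\Sigma$ (that is, the restriction to $\Sigma$ of the identity map of $\mathbb{R}^3$, equivalently $\Psi$ read in ambient coordinates). Since $\gamma\subset\mathbb{S}^2$ we have $|X|\equiv1$ on $\gamma$, so the condition $k=0$ along $\gamma$ is immediate. It then remains only to verify the sign of the Laplacian and the value of the conormal derivative.

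For superharmonicity I would use minimality. Because $\Sigma$ is minimal, each ambient coordinate function is harmonic on $\Sigma$, so
\[
\Delta_\Sigma |X|^2 = 2\sum_{i=1}^3 |\nabla_\Sigma X_i|^2 = 2\cdot 2 = 4,
\]
the middle sum being equal to the dimension of $\Sigma$ because $X$ is an isometric immersion (the tangential projections of the three coordinate vector fields sum to the identity on $T\Sigma$, whose trace is $2$). Hence $\Delta_\Sigma k = -2 < 0$, and $k$ is superharmonic on all of $\Sigma$, as required.

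For the conormal derivative I would exploit the free boundary (orthogonality) condition. Along $\gamma$ the surface $\Sigma$ meets $\mathbb{S}^2$ at a right angle, so the outward sphere normal $X$ lies in $T\Sigma$; moreover $X\perp T\gamma$ because $\gamma\subset\mathbb{S}^2$. Thus $X$ is tangent to $\Sigma$ and orthogonal to $\gamma$, i.e. parallel to the conormal, and since the inward conormal points into the ball while $X$ points outward, $\nu=-X$ on $\gamma$. Computing $\nabla_\Sigma k = -X^{\top}$, where $X^{\top}$ is the tangential part of $X$, and using $X^{\top}=X$ on $\gamma$, gives
\[
\frac{\partial k}{\partial\nu} = \langle \nabla_\Sigma k,\nu\rangle = \langle -X,-X\rangle = |X|^2 = 1 \quad\text{on }\gamma.
\]

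The construction is completely explicit, so there is no genuine analytic obstacle of the kind that makes the general Cauchy problem ill-posed; in particular $k$ is defined globally and analytically on all of $\Sigma$, not just near $\gamma$. The only points demanding care are bookkeeping: confirming $\sum_i|\nabla_\Sigma X_i|^2=2$ for a surface, and fixing the orientation of $\nu$ so that the sign comes out $+1$ rather than $-1$. The conceptual content is simply the observation that the free boundary condition is exactly what forces $X=\pm\nu$ along $\gamma$, which in turn makes the Neumann data of $|X|^2$ constant and thereby matches precisely the simplest Cauchy data $g\equiv0$, $f\equiv1$.
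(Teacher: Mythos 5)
Your proposal is correct and takes essentially the same approach as the paper: the paper's proof also writes down an explicit radial function, namely $k=-\log r$ with $r=|X|$, vanishing on $\gamma$, superharmonic by $\Delta r^2=4$ (minimality) together with $|\nabla r|\le 1$, and with inward conormal derivative $1$ because the free boundary condition forces $X=-\nu$ along $\gamma$. Your choice $k=\tfrac{1}{2}\left(1-r^2\right)$ is just a different radial profile, and is marginally cleaner since $\Delta k=-2$ holds identically without needing the gradient bound $|\nabla r|\le 1$.
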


\begin{proof}
Let $x_1,x_2,x_3$ be the rectangular coordinates of $\mathbb R^3$ such that $(x_1,x_2,x_3)=(0,0,0)$ at the center of the unit ball $B$. $x_1,x_2,x_3$ are harmonic on the minimal surface $\Sigma$. Define
$$r^2=x_1^2+x_2^2+x_3^2\,.$$
Then 
\begin{eqnarray*}
\Delta r^2&=&2\sum_{i=1}^3(x_i\Delta x_i+|\nabla x_i|^2)\\
&=&4,
\end{eqnarray*}
and
\begin{eqnarray*}
\Delta\log r&=&{\rm div}\left(\frac{\nabla r^2}{2r^2}\right)\\
&=&-\frac{2}{r^2}|\nabla r|^2+\frac{2}{r^2}\\
&\geq&0.
\end{eqnarray*}
Hence $$k:=-\log r$$ is a superharmonic function on $\Sigma$ satisfying the Cauchy conditions (\ref{cc1}).
\end{proof}

\begin{lemma}\label{har1}
Under the same hypotheses as Lemma \ref{sub}, there exists a harmonic function $h$ on $\Sigma$ satisfying the Cauchy conditions
\begin{equation}\label{cc2}
h=0,\,\,\,\,\frac{\partial h}{\partial \nu}=1\,\,\,{\rm along}\,\,\,\gamma.
\end{equation}
\end{lemma}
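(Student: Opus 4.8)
The plan is to pass to the conformal model and exploit that harmonicity is a conformal invariant in dimension two: a function is harmonic on $\Sigma$ if and only if it is harmonic, in the Euclidean sense, as a function of the isothermal coordinates on $D$. Writing the inward unit conormal along $\delta$ as $F^{-1}\partial_y$, the Cauchy conditions (\ref{cc2}) become
\begin{equation*}
h=0,\qquad \frac{\partial h}{\partial y}=F(x,0)\qquad\text{along }\delta .
\end{equation*}
Since $F(x,0)$ is a positive real-analytic function on $\delta$ (Lewy's theorem \cite{L}), the first step is purely local: extend $F(x,0)$ to a holomorphic function $\Phi$ on a complex neighborhood of $\delta$ and set $h=\operatorname{Im}\int \Phi$, the imaginary part of the holomorphic primitive that is real on $\delta$. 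This $h$ is harmonic, vanishes on $\delta$, and satisfies $\partial_y h=F(x,0)$ there, so it solves the Cauchy problem in a neighborhood of $\gamma$; this is exactly the local solvability recorded in \cite{HV}.

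The substance of the lemma is to promote this germ to a harmonic function on all of $\Sigma$. Because the local solution already carries the exact Cauchy data by construction, the Neumann condition $\partial_\nu h=1$ is automatic \emph{wherever} the solution is defined, so the entire difficulty is global existence; note also that on the annulus single-valuedness of $h$ is not at issue, since $dh$ is exact and $h$ is therefore well defined wherever it continues. Here I would bring in the superharmonic function $k=-\log r$ of Lemma \ref{sub}, which already realizes the desired Cauchy data globally. Setting $w:=h-k$, the existence of a global $h$ is equivalent to solving the Poisson equation $\Delta w=-\Delta k=\rho$ on $\Sigma$ with homogeneous Cauchy data $w=0,\ \partial_\nu w=0$ along $\gamma$, where $\rho=\Delta\log r=\tfrac{2}{r^2}(1-|\nabla r|^2)\ge 0$ is a fixed smooth nonnegative function. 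The gain from subtracting $k$ is that $w$ is forced to be subharmonic, which opens the door to maximum-principle estimates that a bare harmonic continuation does not enjoy; a second-order computation at $\gamma$ shows in particular that $w\ge 0$ near $\gamma$, i.e. $h\ge k$ there with equal first-order data.

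The main obstacle is precisely this global existence: the Cauchy problem for $\Delta$ is ill-posed, and the holomorphic primitive above need not continue across all of $\Sigma$ (equivalently, in the strip model the series representing the solution converges only within the strip of analyticity of $F(x,0)$). My plan to defeat this is to continue the local solution analytically through $\Sigma$ and to rule out interior singularities using the two features the minimal surface supplies: $\Sigma$ is real-analytic, so the continuation can be obstructed only by a genuine singularity of $w$, and the globally defined superharmonic barrier $k$ furnishes the a priori control needed to prevent such singularities from forming. Concretely, I would combine the one-sided bound $h\ge k$ near $\gamma$ with a maximum-principle comparison against $k$ on the remaining boundary of $\Sigma$ to confine the subharmonic $w$, thereby keeping $h=k+w$ bounded and forcing the continuation to reach the whole of $\Sigma$. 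Making this a priori bound rigorous—so that analytic continuation is never halted before exhausting $\Sigma$—is the step I expect to be genuinely delicate, and it is where the special geometry (minimality, the orthogonal free-boundary condition, and the explicit barrier $k$) must do the decisive work.
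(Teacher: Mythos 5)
Your proposal correctly reduces the problem to the conformal picture ($h=0$, $\partial h/\partial y=F(x,0)$ along $\delta$), produces the standard local solution, and identifies exactly where the difficulty lies — but it then stops: the global existence step, which is the entire content of the lemma, is left as a plan, and the plan as stated cannot be carried out. Three concrete problems. First, the maximum-principle confinement is circular: to compare the subharmonic $w=h-k$ against $k$ "on the remaining boundary of $\Sigma$" you must already have $w$ defined on all of $\Sigma$, whereas $w$ exists only on whatever subdomain the continuation has reached; a priori bounds yield existence through compactness of a family of approximate solutions, not by propping up the continuation of a single germ, and for an ill-posed problem no estimate of the solution by its Cauchy data is available in the first place. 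Second, even granting a bound, boundedness does not force extendability of a harmonic function: the obstruction to continuation need not be an isolated singularity (where the removable-singularity argument you invoke would apply) — bounded harmonic functions can have natural boundaries, e.g.\ the real part of $\sum_n 2^{-n}z^{2^n}$ is continuous on the closed disk yet continues across no boundary point. Third, the claimed one-sided bound "$w\ge0$ near $\gamma$ by a second-order computation" fails: along the free boundary the radial direction is tangent to $\Sigma$ (this is precisely orthogonality to the sphere), so $|\nabla r|=1$ and $\Delta w=\frac{2}{r^2}\left(1-|\nabla r|^2\right)=0$ on $\gamma$; since the first-order Cauchy data of $w$ also vanish, the second-order jet of $w$ is identically zero on $\gamma$ and the expansion gives no sign information.

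The paper's proof avoids analytic continuation of the local solution altogether, and this is the idea your proposal is missing. It considers the global family $\mathcal{H}_0^\gamma$ of solutions $u$ of $Lu:=\Delta u+\Delta\log r=0$ (equivalently, $u=k+\text{harmonic}$) that vanish on $\gamma$, are continuous on $\partial\Sigma$, and have $\partial u/\partial\nu\le0$ on $\gamma$; it sets $M_0=\max\{\min_\gamma \partial u/\partial\nu : u\in\mathcal{H}_0^\gamma\}$ and proves $M_0=0$ and that it is attained. The engine is an explicit construction of \emph{entire} harmonic functions $h(x,y)$ with $h(x,0)=0$ and $\partial_y h(x,0)$ equal to an essentially arbitrary finite Fourier sum (built from the alternating series $\sum (-1)^j f_k^{(2j)}(x)\,y^{2j+1}/(2j+1)!$, which converges everywhere because $f_k$ is a trigonometric polynomial); adding such a correction to any candidate with $m_0(u)<0$ strictly improves it, contradicting maximality, and a limiting argument then produces the maximizer $u_\gamma$, whence $h=k-u_\gamma$ solves \eqref{cc2}. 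So in the paper $k$ is not a barrier controlling a continuation; it is the fixed particular solution that converts the Cauchy problem into a maximization of Neumann data over harmonic functions with zero Dirichlet data on $\gamma$ — a mechanism that actually closes the gap your proposal leaves open.
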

\begin{proof}
Consider the following PDE on $\Sigma$,
\begin{equation}\label{pde}
{L}u:=\Delta u+\left(-\frac{2}{r^2}|\nabla r|^2+\frac{2}{r^2}\right)=0.
\end{equation}
Lemma \ref{cc1} implies $u=k$ is a solution of $Lu=0$, and so the following are equivalent:
\begin{center}
$u$ is a solution of $Lu=0.\,\,\Longleftrightarrow\,\,u-k$ is harmonic.
\end{center}
Moreover, the boundary value problem for ${L}u=0$ has a unique solution $u$ as the sum of $k$ and $h$, where $h$ satisfies
$$\Delta h=0\,\,\,{\rm in}\,\,\,\Sigma,\,\,\,\,h=u-k\,\,\,{\rm on}\,\,\,\partial\Sigma.$$
Let $\mathcal{H}^\gamma$ be the set of all solutions of (\ref{pde}) on $\Sigma$  which vanish along $\gamma$ and are continuous on $\partial\Sigma$. Define a subset $\mathcal{H}_0^\gamma$ of $\mathcal{H}^\gamma$ by
$$\mathcal{H}_0^\gamma=\{u\in\mathcal{H}^\gamma:\frac{\partial u}{\partial\nu}\leq0 \,\,\,{\rm along}\,\,\,\gamma\}.$$
$\mathcal{H}_0^\gamma$ is nonempty because $k-cy$ is in $\mathcal{H}_0^\gamma$ if $c$ is a sufficiently big constant.  

Now that we have an element $u$ in $\mathcal{H}^\gamma_0$, we can slowly deform the values of $u$ along $\partial\Sigma$ by increasing them on $\partial\Sigma\setminus\gamma $ and fixing them on $\gamma$ at zero. In this way, we can obtain a 1-parameter family of boundary values $b_t$ along $\partial\Sigma$, which determine a 1-parameter family of solutions $u_t$ of $L(u)=0$ in $\Sigma$ with $u_t|_{\partial\Sigma}=b_t$. Here we are hoping that the values of $\partial u_t/\partial\nu$ will increase, and fortunately, one of $u_t$ will satisfy $\frac{\partial u_t}{\partial\nu}=0\,\,{\rm along}\,\,\gamma.$ Then $k-u_t$ will be the harmonic function satisfying (\ref{cc2}).

Following this relatively intuitive idea, let us give a rigorous proof. Given $u\in\mathcal{H}_0^\gamma$, define
$$m_0(u)={\rm min}\,\{{\partial u}/{\partial\nu}|_\gamma\}$$
and
$$M_0={\rm max}\,\,\{m_0(u):\,u\in\mathcal{H}_0^\gamma\}.$$
Obviously 
$$m_0(u)\leq0\,\,{\rm for\,\,any}\,\,u\in\mathcal{H}_0^\gamma,\,\,\,\,{\rm and}\,\,\,\,M_0\leq 0.$$
We will show that $M_0=0$ and that there exists  $u_{\gamma}\in\mathcal{H}_0^\gamma$ such that $m_0(u_{\gamma})=0$. Then $k-u_\gamma$ will be the desired harmonic function on $\Sigma$ satisfying the Cauchy conditions (\ref{cc2}).

First, let's suppose $\Sigma$ is simply connected. Let $C^\omega(\gamma)$ be the set of all analytic functions on the free boundary $\gamma$ and $C^0(\partial\Sigma\setminus\gamma)$ the set of all continuous functions on $\partial\Sigma\setminus\gamma$. 
Given a bounded continuous function $e$  on $\partial\Sigma$ vanishing on $\gamma$, let $u_{e}$ be the unique solution of $Lu=0$ on $\Sigma$ satisfying the Dirichlet condition $u_{e}|_{\partial\Sigma}=e$. Define the {\it Dirichlet-to-Neumann map}
\begin{center}
$\mathcal{N}:C^0(\partial\Sigma\setminus\gamma)\rightarrow C^\omega(\gamma)$\,\, by\,\, $\mathcal{N}(e|_{\partial\Sigma\setminus\gamma})=\frac{\partial u_{e}}{\partial\nu}|_\gamma$.
\end{center} 
Then $\mathcal{N}$ is linear, and the boundary point lemma(Hopf lemma) tells us that $\mathcal{N}$ is one-to-one and order-preserving, that is, if ${e}_1,{e}_2\in C^0(\partial\Sigma\setminus\gamma)$, ${e}_1\leq {e}_2$ on $\partial\Sigma\setminus\gamma$ and ${e}_1<{e}_2$ on a nonempty open subset of $\partial\Sigma\setminus\gamma$, then $\mathcal{N}(e_1)$ is strictly smaller than $\mathcal{N}(e_2)$ on $\gamma$.

Suppose $M_0<0$, and let's derive a contradiction. For any $\eta>0$ there exists $u_\eta\in \mathcal{H}_0^\gamma$ such that 
$$M_0-\eta\leq\frac{\partial u_{\eta}}{\partial\nu}\leq0\,\,\,\,{\rm along}\,\,\,\gamma.$$ 
If $M_1$ denotes the maximum of $\frac{\partial u_{\eta}}{\partial\nu}(x,0)$ for small $\eta$, then $M_0<M_1\leq0$. That is because if $M_1\leq M_0$, then there exists a harmonic function $\bar{h}(x,y)=cy(c>0)$ in $\Sigma$ such that $u_\eta+\bar{h}$ is in $\mathcal{H}^\gamma$ and satisfies
$$M_0-\eta+\frac{c}{F(x,0)}\leq\frac{\partial(u_\eta +\bar{h})}{\partial\nu}(x,0)\leq M_1+\frac{c}{F(x,0)}\leq  M_0+\frac{c}{F(x,0)}.$$  
So there exist a sufficiently small $\eta$ and some $c>0$ such that 
$$M_0<M_0-\eta+\frac{c}{{\rm max} \{F(x,0)\}},\,\,\,\,M_0<m_0(u_\eta +\bar{h}),$$
and
$$M_0+\frac{c}{{\rm min}\{F(x,0)\}}<0,\,\,\,\,u_\eta+\bar{h}\in\mathcal{H}_0^\gamma,$$
which contradicts that $M_0$ is the maximum among all $m_0(u),u\in\mathcal{H}_0^\gamma$. Therefore $M_1$ should be greater than $M_0$.

Define a smooth non-constant function ${f}(x)$ on $\gamma$ by
$$f(x)=\frac{2}{5}\left(\frac{M_0+M_1}{2}-\frac{\partial u_{\eta}}{\partial\nu}(x,0)\right)F(x,0),$$
and set $b=\frac{M_1-M_0}{5}>0$. Then
\begin{equation}\label{Mb}
M_0+b-\frac{3}{5}\eta\leq\frac{\partial u_{\eta}}{\partial\nu}(x,0)+\frac{f(x)}{F(x,0)}\leq M_1-b. 
\end{equation}
Let $b_0=\frac{f(1)-f(-1)}{2}$.
Then $f(x)-b_0x$ can be extended to a continuous periodic function with period 2 on the $x$-axis; so its Fourier series will be written as
$$f(x)-b_0x=\frac{a_0}{2}+\sum_{n=1}^\infty(a_n\cos n\pi x+b_n\sin n\pi x).$$
Define 
$$f_m(x)=\frac{a_0}{2}+\sum_{n=1}^m(a_n\cos n\pi x+b_n\sin n\pi x).$$
Then $\{f_m(x)\}$ converges absolutely to $f(x)$, and for any $\varepsilon>0$ there exists $k$ such that
\begin{equation}\label{f_k}
|f(x)-b_0x-f_k(x)|<\varepsilon.
\end{equation}
Since $f_k(x)$ is a finite Fourier sum, its Taylor series 
at $x=a$ has an infinite radius of convergence for any $a$:
\begin{equation}\label{Taylor}
    f_k(x)= f_k(a)+f_k'(a)(x-a)+ \frac{f_k''(a)}{2!}(x-a)^2+\frac{f_k'''(a)}{3!}(x-a)^3+\frac{f_k^{(4)}(a)}{4!}(x-a)^4+\cdots.
\end{equation} 

Now, we use the convergence of the Taylor series of $f_k(x)$ to find the desired harmonic function. Remember that the term-by-term integration of (\ref{Taylor}) converges absolutely. Hence the following series also converges for all $-\infty<x<\infty$:
\begin{equation*}
f_k(a)(x-a)-\frac{f_k''(a)}{3!}(x-a)^3+\frac{f_k^{(4)}(a)}{5!}(x-a)^5-\frac{f_k^{(6)}(a)}{7!}(x-a)^7+\cdots+(-1)^k\frac{f_k^{(2k)}(a)}{(2k+1)!}(x-a)^{2k+1}+\cdots.
\end{equation*}
For each $a$ we can define a function $h_a(y)$ on the vertical line $\{(a,y):-\infty<y<\infty\}$:
$$h_a(y)=(b_0a+f_k(a))y-\frac{f_k''(a)}{3!}y^3+\frac{f_k^{(4)}(a)}{5!}y^5-\frac{f_k^{(6)}(a)}{7!}y^7+\cdots+(-1)^k\frac{f_k^{(2k)}(a)}{(2k+1)!}y^{2k+1}+\cdots.$$
Therefore we have an entire function $h(x,y)$ with two variables on $\mathbb R^2$:
\begin{equation}\label{h(x,y)}
h(x,y)=(b_0x+f_k(x))y-\frac{f_k''(x)}{3!}y^3+\frac{f_k^{(4)}(x)}{5!}y^5-\frac{f_k^{(6)}(x)}{7!}y^7+\cdots+(-1)^k\frac{f_k^{(2k)}(x)}{(2k+1)!}y^{2k+1}+\cdots.
\end{equation}
Clearly, $h(x,y)$ is well-defined in the entire plane and $h(x,y)-b_0xy$ is periodic in $x$.
Moreover, $h(x,y)$ is harmonic because
\begin{eqnarray*}
h_{xx}&=&f_k''(x)y-\frac{f_k^{(4)}(x)}{3!}y^3+\frac{f_k^{(6)}(x)}{5!}y^5-\cdots+(-1)^k\frac{f_k^{(2k+2)}(x)}{(2k+1)!}y^{2k+1}+\cdots\\
&=&-h_{yy}.
\end{eqnarray*}
Thus the entire harmonic function $h(x,y)$ satisfies the Cauchy conditions on $\Sigma$:
$$h(x,0)=0\,\,\,{\rm and}\,\,\,\frac{\partial h}{\partial\nu}(x,0)=\frac{\partial h}{\partial y}(x,0)\,\frac{\partial y}{\partial\nu}(x,0)=\frac{b_0x+f_k(x)}{F(x,0)}.$$
Then by (\ref{f_k}) we have
$$\frac{b_0x+f_k(x)-\varepsilon}{F(x,0)}<\frac{f(x)}{F(x,0)}<\frac{b_0x+f_k(x)+\varepsilon}{F(x,0)}$$
and hence by (\ref{Mb}) we get
$$M_0+b-\frac{3}{5}\eta-\frac{\varepsilon}{F(x,0)}<\frac{\partial u_{\eta}}{\partial\nu}(x,0)+\frac{b_0x+f_k(x)}{F(x,0)}< M_1-b+\frac{\varepsilon}{F(x,0)}.$$
Therefore if $\eta$ and $\varepsilon$ are sufficiently small, we have
$$M_0<M_0+\frac{b}{2}<\frac{\partial(u_{\eta}+h)}{\partial\nu}(x,0)< M_1-\frac{b}{2}<0.$$
Thus
$$u_{\eta}+h\in\mathcal{H}^\gamma_0\,\,\,\,{\rm and}\,\,\,\,M_0+\frac{b}{2}<m_0(u_{\eta}+h),$$
which contradicts the assumption that $M_0$ is the maximum among 
$\{m_0(u):u\in\mathcal{H}_0^\gamma\}$.
Therefore $M_0=0$. 

Finally, it remains to show the existence of a function $u_{\gamma}\in\mathcal{H}_0^\gamma$ with $m_0(u_{\gamma})=0$.
Let $\{u_n(x,y)\}$ be a sequence of functions in $\mathcal{H}_0^\gamma$ such that $-\frac{1}{n}<m_0(u_n)<0$. Define a (piecewise) continuous function on $\partial\Sigma$:
$$b_{\partial\Sigma}=\limsup_{n\rightarrow\infty}\,u_n|_{\partial\Sigma},$$
and let $u_\gamma(x,y)$ be a unique function in $\Sigma$ with $u_\gamma|_{\partial\Sigma}=b_{\partial\Sigma}$. It may happen that $u_\gamma\notin\mathcal{H}_0^\gamma$ because $b_{\partial\Sigma}$ can be infinite at some points of $\partial\Sigma\setminus\gamma$, but $u_\gamma$ is finite in $\Sigma\setminus\partial\Sigma$ and a solution of $Lu=0$. Since $u_\gamma|_{\gamma}=u_n|_{\gamma}$ and $m_0(u_n)\rightarrow0$, we have
$$u_\gamma=0\,\,\,{\rm and}\,\,\,\frac{\partial u_\gamma}{\partial\nu}=0\,\,\,{\rm along}\,\,\,\gamma.$$
Therefore
$$\Delta(k-u_\gamma)=0\,\,\,{\rm in}\,\,\,\Sigma\,\,\,{\rm and}\,\,\,k-u_\gamma=0,\,\,\,\frac{\partial(k-u_\gamma)}{\partial\nu}=1\,\,\,{\rm along}\,\,\,\gamma.$$
Setting $h=k-u_\gamma$ completes the proof when $\Sigma$ is simply connected.

The same proof works also for doubly connected $\Sigma$. We do not need to subtract $b_0x$ from $f(x)$ because $f(x)$ can be directly lifted to a continuous periodic function on the $x$-axis.
\end{proof}

\begin{remark}
a) In the proof of Lemma \ref{har1}, we also found an entire solution $h(x,y)$ to the Cauchy problem $\Delta h(x,y)=0$, $h(x,0)=0$, $\frac{\partial h}{\partial y}(x,0)=f(x)$ in $\mathbb R^2$ by using the Taylor series of $f(x)$ in case its radius of convergence is infinite. But one can also use the Fouruer series: If $a_n,\,b_n$ are the Fourier coefficients of ${f(x)}$ with period $2$, then
$$h(x,y)=\frac{a_0}{2}y+\sum_{n=1}^\infty\frac{1}{n}\sinh ny({a_n}\cos n\pi x+{b_n}\sin n\pi x).$$

b) One can similarly find an entire harmonic solution $h(x,y)$ to the Cauchy problem $h(x,0)=g(x)$, $\frac{\partial h}{\partial y}(x,0)=0$ in $\mathbb R^2$ in two ways, provided $g(x)$ is an analytic periodic function with period $2$ that has a Taylor series with an infinite radius of convergence.
Given the Taylor series of $g(x)$ centered at $a$,
$$g(x)= g(a)+g'(a)(x-a)+ \frac{g''(a)}{2!}(x-a)^2+\frac{g'''(a)}{3!}(x-a)^3+\frac{g^{(4)}(a)}{4!}(x-a)^4+\cdots,$$
we extract
\begin{equation*}
g(a)-\frac{g''(a)}{2!}(x-a)^2+\frac{g^{(4)}(a)}{4!}(x-a)^4-\frac{g^{(6)}(a)}{6!}(x-a)^6+\cdots+(-1)^k\frac{g^{(2k)}(a)}{(2k)!}(x-a)^{2k}+\cdots.
\end{equation*}
Then, define a function $h_a(y)$ on the vertical line $\{(a,y):-\infty<y<\infty\}$ by
$$h_a(y)=g(a)-\frac{g''(a)}{2!}y^2+\frac{g^{(4)}(a)}{4!}y^4-\frac{g^{(6)}(a)}{6!}y^6+\cdots+(-1)^k\frac{g^{(2k)}(a)}{(2k)!}y^{2k}+\cdots,$$
and the solution is 
\begin{equation*}
h(x,y)=g(x)-\frac{g''(x)}{2!}y^2+\frac{g^{(4)}(x)}{4!}y^4-\frac{g^{(6)}(x)}{6!}y^6+\cdots+(-1)^k\frac{g^{(2k)}(x)}{(2k)!}y^{2k}+\cdots.
\end{equation*}
On the other hand, given $a_k$ and $b_k$ for the Fourier coefficients of $g(x)$,  the entire harmonic function $h(x,y)$  can be also written as
\begin{equation}\label{cosh}
h(x,y)=\frac{a_0}{2}+\sum_{n=1}^\infty\cosh ny(a_n\cos n\pi x+b_n\sin n\pi x).
\end{equation}
\end{remark}

Two special solutions to the Cauchy problems in $\mathbb R^2$ can generate a general solution as follows.

\begin{proposition}
Let $f(x),g(x)$ be analytic periodic functions whose Taylor series have an infinite radius of convergence on the $x$-axis of $\mathbb R^2$. Then the solution $h(x,y)$ to the Cauchy problem in $\mathbb R^2$
$$\Delta h(x,y)=0,\,\,\,\,h(x,0)=g(x),\,\,\,\,\frac{\partial h}{\partial y}(x,0)=f(x)$$ can be written as
$$h(x,y)=\sum_{k=0}^\infty \left\{ (-1)^k\frac{g^{(2k)}(x)}{(2k)!}y^{2k}+(-1)^k\frac{f_k^{(2k)}(x)}{(2k+1)!}y^{2k+1}\right\}.$$
\end{proposition}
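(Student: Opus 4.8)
The plan is to reduce the claim to the two special Cauchy problems already solved in the Remark and then superpose. Since the Laplace operator and the assignment of Cauchy data $h(\cdot,0)$ and $\partial_y h(\cdot,0)$ are all linear, it suffices to solve
$$\Delta h_g=0,\quad h_g(x,0)=g(x),\quad \partial_y h_g(x,0)=0$$
and
$$\Delta h_f=0,\quad h_f(x,0)=0,\quad \partial_y h_f(x,0)=f(x)$$
separately and then set $h=h_g+h_f$. Part (b) of the Remark supplies $h_g$ as the even series $\sum_k(-1)^k\frac{g^{(2k)}(x)}{(2k)!}y^{2k}$, and the odd-power construction in part (a) of the Remark supplies $h_f$ as $\sum_k(-1)^k\frac{f^{(2k)}(x)}{(2k+1)!}y^{2k+1}$ (with the general datum $f$ in place of the Fourier truncation $f_k$ used in the Lemma). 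Adding the two series yields exactly the asserted formula, so the whole content of the Proposition reduces to (i) convergence and harmonicity of these series for a general analytic datum and (ii) uniqueness.

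For (i) I would first upgrade the hypothesis ``infinite radius of convergence on the $x$-axis'' to the statement that $f$ and $g$ are restrictions of entire functions $f(z),g(z)$ on $\mathbb{C}$: a power series with infinite radius of convergence about one real point already defines an entire function agreeing with the datum by analytic continuation. This is the device that makes the two series converge without the Fourier truncation of the Lemma. Indeed, let $G(z)$ be the entire extension of $g$ and let $H(z)=\int_0^z f(w)\,dw$, which is entire and real on the real axis. Expanding $G$ and $H$ in their globally convergent Taylor series about a real point $x$ and evaluating at $x+iy$, one reads off
$$\operatorname{Re}G(x+iy)=\sum_{k=0}^\infty(-1)^k\frac{g^{(2k)}(x)}{(2k)!}y^{2k},\qquad \operatorname{Im}H(x+iy)=\sum_{k=0}^\infty(-1)^k\frac{f^{(2k)}(x)}{(2k+1)!}y^{2k+1},$$
using $H^{(2k+1)}=f^{(2k)}$ together with the fact that only the even (resp.\ odd) powers of $iy$ contribute a real (resp.\ imaginary) part. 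Hence $h=\operatorname{Re}G(x+iy)+\operatorname{Im}H(x+iy)$ is a sum of real and imaginary parts of holomorphic functions, so the double series converges absolutely and locally uniformly on $\mathbb{R}^2$ and $h$ is harmonic. (Alternatively, harmonicity can be checked exactly as in the Remark by the term-by-term identity $h_{xx}=-h_{yy}$.)

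It remains to verify the Cauchy data and uniqueness. Setting $y=0$ in the series kills every term except the $k=0$ term of the even part, giving $h(x,0)=g(x)$; differentiating in $y$ and then setting $y=0$ kills every term except the $k=0$ term of the odd part, giving $\partial_y h(x,0)=f(x)$. Uniqueness is immediate from the Cauchy--Kovalevskaya theorem: the $x$-axis is non-characteristic for $\Delta$, so the analytic Cauchy problem has at most one real-analytic solution near it, hence at most one entire solution, which must then be the series above. I expect the only genuine subtlety to lie in step (i): for a datum that is not a finite trigonometric polynomial one cannot rearrange and integrate the series by hand as in the Lemma, and the clean justification is precisely the passage to the entire extensions $f(z),g(z)$ and the identification of the two series as $\operatorname{Im}H$ and $\operatorname{Re}G$. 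The periodicity hypothesis plays no role in the representation itself and is only inherited from the intended application to $\Sigma$.
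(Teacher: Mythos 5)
Your proposal is correct, and it supplies what the paper actually omits: the Proposition is stated with no proof at all, only the lead-in sentence that the two special solutions ``generate'' the general one, i.e.\ precisely the superposition $h=h_g+h_f$ you carry out. The genuine difference lies in how the two special series are justified. The paper's Remark builds them by term-by-term Taylor manipulation, an argument that is airtight in Lemma 2.2 only because the datum there is a finite Fourier sum $f_k$; for a general analytic datum with infinite radius of convergence the Remark merely asserts the analogous formulas, and a direct verification of $h_{xx}=-h_{yy}$ would still need locally uniform convergence of the differentiated series. Your device---passing to the entire extensions $G$ of $g$ and $H(z)=\int_0^z f(w)\,dw$, and recognizing the even series as $\operatorname{Re}G(x+iy)$ and the odd series as $\operatorname{Im}H(x+iy)$---packages convergence, joint smoothness, harmonicity and the Cauchy data into standard facts about holomorphic functions, so it makes the general case rigorous essentially for free. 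You also prove uniqueness (Cauchy--Kovalevskaya within the analytic class, plus the automatic real-analyticity of harmonic functions and connectedness of $\mathbb{R}^2$), which the paper tacitly assumes in speaking of ``the solution.'' Two further points are well taken: reading the $f_k^{(2k)}$ in the displayed formula as a typo for $f^{(2k)}$ is surely the intended meaning (the subscript collides with the summation index), and the periodicity hypothesis indeed plays no role in the representation itself, only in the application to the annulus.
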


Going back to $\Sigma$, Lemma \ref{har1} gives us desired isothermal coordinates:

\begin{lemma}\label{ic}
Let $\Sigma$ be a simply connected or doubly connected minimal surface with free boundary $\gamma$ in a unit ball of $\mathbb R^3$. Then there exist isothermal coordinates $X,Y$ in $\Sigma$ away from the punctures $p_1,\ldots,p_n$ of $\Sigma$ such that $$ds^2=F(X,Y)^2(dX^2+dY^2)\,\,\,\,{\rm in}\,\,\,\,\Sigma, $$
and
$$Y=0,\,\,\,\,F(X,0)=1 \,\,\,\,{\rm along}\,\,\,\,\gamma.$$
\end{lemma}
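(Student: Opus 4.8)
The plan is to build the new coordinates directly from the harmonic function $h$ produced by Lemma \ref{har1}, the one satisfying $h=0$ and $\partial h/\partial\nu=1$ along $\gamma$. Since $h\equiv0$ along $\gamma$, I set $Y:=h$, so that the condition $Y=0$ along $\gamma$ holds for free. I then let $X$ be a harmonic conjugate of $Y$, normalized so that $\Phi:=X+iY$ is a holomorphic function of $z=x+iy$; the Cauchy--Riemann equations read $X_x=Y_y=h_y$ and $X_y=-Y_x=-h_x$, so $dX=h_y\,dx-h_x\,dy$, which is closed because $h$ is harmonic and hence determines $X$ (up to a constant) at least locally. As $\Phi$ is holomorphic, $z\mapsto w=X+iY$ is conformal, so $X,Y$ are again isothermal coordinates wherever $\Phi'\neq0$.

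It remains to check that the new conformal factor is $1$ along $\gamma$. Writing $ds^2=F^2(dx^2+dy^2)=F^2|dz|^2$ and using $dw=\Phi'(z)\,dz$, the metric becomes
$$ds^2=\frac{F^2}{|\Phi'|^2}\,(dX^2+dY^2),$$
so the new factor is $\widetilde F=F/|\Phi'|$, and since $\Phi'=X_x+iY_x=h_y+ih_x$ we have $|\Phi'|^2=h_x^2+h_y^2$. Evaluating along $\gamma$, i.e.\ along $y=0$: the identity $h\equiv0$ there forces the tangential derivative $h_x(x,0)=0$, while the Cauchy condition $\partial h/\partial\nu=1$ together with $\nu=F^{-1}\partial_y$ gives $h_y(x,0)=F(x,0)$. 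Hence $|\Phi'|=F(x,0)$ and $\widetilde F(X,0)=F/|\Phi'|=1$ along $\gamma$, which is exactly the assertion. Note also that $X_x=h_y=F>0$ along $\gamma$, so $X$ is strictly monotone there and the chart is genuinely injective near the boundary.

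Next I handle the punctures and the annular case. The construction degenerates precisely at the zeros of the holomorphic function $\Phi'=h_y+ih_x$, that is, at the critical points of $h$; since $\Phi'$ is not identically zero (it equals $F(x,0)>0$ along $\gamma$) its zeros are isolated, and these are the finitely many punctures $p_1,\dots,p_n$ at which $\widetilde F$ blows up and which must be removed. If $\Sigma$ is simply connected, $X$ is globally single-valued. If $\Sigma$ is doubly connected I use the periodic strip model: the conjugate $X$ is not periodic but increases by
$$\oint_\gamma dX=\oint_\gamma h_y\,dx=\oint_\gamma \frac{\partial h}{\partial\nu}\,ds=\mathrm{length}(\gamma)$$
as one traverses a generator, so $X$ becomes the natural angular coordinate on the resulting half-cylinder, with $\gamma$ of length $\oint_\gamma dX$, in agreement with $\widetilde F\equiv1$ on $\gamma$.

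The substantive difficulty has already been absorbed into Lemma \ref{har1}, namely the global existence of a harmonic $h$ realizing the Cauchy data $h=0,\ \partial h/\partial\nu=1$; granting that, the present statement is an essentially formal conformal change of variables. The only points needing genuine care are the isolated critical points of $h$ (which account for the punctures) and the period computation in the annular case, and I expect neither to pose a real obstacle.
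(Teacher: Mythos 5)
Your proposal is correct and follows essentially the same route as the paper: take $Y:=h$ from Lemma \ref{har1}, let $X$ be its harmonic conjugate, observe that the Cauchy data force the new conformal factor to equal $1$ along $\gamma$, identify the punctures with the zeros of the derivative of the holomorphic coordinate change, and note that in the annular case only $dX$ (not $X$) need be single-valued. In fact your write-up is somewhat more explicit than the paper's, which asserts $F(X,0)=1$ and the well-definedness in the doubly connected case without carrying out the computations of $|\Phi'|$ along $\gamma$ or of the period $\oint_\gamma dX$.
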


\begin{proof}
From Lemma \ref{har1} we get a harmonic function $h$ in $\Sigma$ with
\begin{equation}\label{F=1}
h=0,\,\,\,\frac{\partial h}{\partial\nu}=1\,\,\,{\rm along}\,\,\,\gamma.
\end{equation}
Let $h^*$ be the harmonic function that is conjugate to $h$. Then $\{X,Y\}:=\{h^*,h\}$ become isothermal coordinates on $\Sigma$ such that $Y=0$ along $\gamma$. If we write the metric of $\Sigma$ as
$$ds^2=F(X,Y)^2(dX^2+dY^2),$$
then (\ref{F=1}) implies
$$F(X,0)=1\,\,\,{\rm along}\,\,\,\gamma.$$
In the domain $D$ (as in Lemma \ref{har1}), the function $Z=H(z)$ mapping $x+iy$ to $X+iY$ is holomorphic. $z$ is called a branch point of $H(z)$ if $H'(z)=0 $. Then $(H^{-1})'(Z)=\infty$ if $H^{-1}(Z)$ is a branch point of $H(z)$. Hence 
$$0< F(X,Y)\leq\infty.$$
But $X,Y$ can be called isothermal coordinates only if 
$0< F(X,Y)<\infty.$
So the branch points of $H(z)$ will be deleted from $\Sigma$ and called the {\it punctures} of $\Sigma$.

When $\Sigma$ is doubly connected, $X$ is multi-valued in $\Sigma$ and periodic in $D=\mathbb R\times(0,a)$, and hence  $F(X,Y)^2(dX^2+dY^2)$ is well defined in $\Sigma$.
\end{proof}

\section{reflection across a sphere}
With the specific isothermal parameters, we can now introduce the reflection principle for free boundary minimal surfaces in a ball.

\begin{theorem}\label{reflection}
Let $\Sigma$ be an immersed, simply connected or doubly connected minimal surface in the unit ball $B\subset\mathbb R^3$. In case $\Sigma$ is simply connected, assume that $\gamma:=\partial\Sigma\cap\partial B$ is connected and immersed, and $\Sigma\cup\gamma$ is $C^1$ such that $\Sigma$ is perpendicular to $\partial B$ along $\gamma$, that is, $\gamma$ is a free boundary of $\Sigma$ in $B$. If $\Sigma$ is doubly connected, $\gamma$ is assumed to be one of the two boundary components of $\Sigma$.

{\rm a)} Then there exists a minimal surface $\Sigma^{double}\supset\Sigma$ that is an analytic continuation of $\Sigma$ across $\gamma$ such that $\Sigma^*:=\Sigma^{double}\setminus\overline{\Sigma}$ is conformally equivalent to $\Sigma$.

{\rm b)} We call $\Sigma^*$ the \underline{spherical mirror image} of $\Sigma$ across $\partial B$.

{\rm c)} $\Sigma^*$ may have ends and in this case $\Sigma^*$ is conformally equivalent to $\Sigma$ with punctures.

{\rm d)} Similarly, a minimal surface $\Sigma$ outside $B$ with a free boundary in $\partial B$ can be reflected across $\partial B$.
\end{theorem}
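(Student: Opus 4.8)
The plan is to realize the reflection not as an ambient motion of $\mathbb{R}^3$ but as an analytic continuation of the conformal parametrization, exploiting the normalization $F\equiv 1$ on $\gamma$ supplied by Lemma \ref{ic}. First I would fix isothermal coordinates $X,Y$ with $\gamma=\{Y=0\}$ and $F(X,0)=1$, and write the conformal harmonic parametrization as $\Psi(X,Y)=(x_1,x_2,x_3)$; by Lewy's theorem $\Psi$ is real analytic up to $\gamma$. The free boundary hypothesis then becomes two pointwise identities on $\gamma$: from $|\Psi|=1$ we get $\langle\Psi,\Psi_X\rangle=0$, and orthogonality of $\Sigma$ to $\partial B$ forces the inward conormal $\Psi_Y$ to be radial, so that $|\Psi_Y|=F(X,0)=1=|\Psi|$ yields $\Psi_Y(X,0)=-\Psi(X,0)$. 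Hence each $x_i$ is harmonic on $D$ with real analytic Cauchy data $x_i(X,0)=g_i(X)$ and $\partial_Y x_i(X,0)=-g_i(X)$, i.e.\ the Robin data $\partial_Y x_i+x_i=0$.

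Next I would define $\Sigma^{double}$ by continuing $\Psi$ across $\gamma$ into $Y<0$. Because the Cauchy data are real analytic, the Cauchy--Kovalevskaya series already used in Lemma \ref{har1} produces a harmonic extension of each $x_i$ to a two-sided neighborhood of $\gamma$; equivalently, writing $x_i=\mathrm{Re}\,\phi_i$ with $\phi_i$ holomorphic, one continues $\phi_i$ across the real axis by the Schwarz-type reflection dictated by $\partial_Y x_i+x_i=0$. Setting $D^*$ equal to the reflection of $D$ in $\{Y=0\}$, I put $\Sigma^*:=\Psi(D^*)$ and $\Sigma^{double}:=\Sigma\cup\gamma\cup\Sigma^*$. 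It is worth stressing that this is a genuinely new reflection: the Euclidean inversion $I(p)=p/|p|^2$ does not preserve minimality, so $\Sigma^*\neq I(\Sigma)$; a one-line computation with $DI(\Psi)v=v-2\langle\Psi,v\rangle\Psi$ on $\gamma$ and $\Psi_Y=-\Psi$ shows only that $\Psi(X,-Y)$ and $I(\Psi(X,Y))$ share the same $0$th and $1$st order jets along $\gamma$, which is exactly the sense in which $\Sigma^*$ deserves the name spherical mirror image in part (b).

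The minimality of $\Sigma^*$ I would obtain from analytic continuation of the conformality relation. With $\partial_z=\tfrac12(\partial_X-i\partial_Y)$, each $\partial_z x_i$ is holomorphic on $D$ and $\sum_i(\partial_z x_i)^2\equiv 0$ there since $\Psi$ is conformal; being holomorphic and continuing across $\gamma$, this sum vanishes identically on $D^*$ too, so the continued $\Psi$ is again conformal and harmonic, hence a branched minimal immersion. For the conformal equivalence asserted in (a), I note that $D$ and its reflection $D^*$ are biholomorphic (e.g.\ via $z\mapsto -z$, compatible with the period in the annular case), and $\Psi|_{D^*}$ is a conformal parametrization of $\Sigma^*$, so $\Sigma^*\cong D^*\cong D\cong\Sigma$. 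Part (c) follows because the branch points of the map $H$ of Lemma \ref{ic}, together with points where $\Psi\to 0$, become punctures at which the continued $\Psi$ runs off to infinity, producing the ends of $\Sigma^*$; part (d) is identical after interchanging the inside and outside of $B$.

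The main difficulty I anticipate is conceptual rather than computational: one must recognize that no reflection of the ambient space across the sphere can preserve minimality, and therefore express the free boundary (Steklov) condition in the coordinates of Lemma \ref{ic}, where $F\equiv 1$ on $\gamma$ makes it the clean analytic Cauchy datum $\partial_Y\Psi=-\Psi$ that the earlier machinery continues. The remaining points requiring care are that the continuation stays single-valued in the doubly connected, periodic case, and that conformality genuinely survives the continuation; the holomorphic identity $\sum_i(\partial_z x_i)^2\equiv 0$ disposes of the latter at once.
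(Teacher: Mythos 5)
Your scaffolding matches the paper's: you invoke Lemma \ref{ic} to normalize $F\equiv1$ on $\gamma$, you correctly convert the free boundary condition into the Robin-type Cauchy datum $\partial_Y x_i+x_i=0$ along $\{Y=0\}$, and your argument that conformality survives continuation (holomorphic continuation of $\sum_i(\partial_z x_i)^2\equiv0$) is exactly the paper's. But the central step --- actually producing the analytic continuation of $\Psi$ on the \emph{full} mirror domain $D^*$ --- is asserted rather than constructed, and this is precisely where the paper does its real work. Your two justifications both fall short. First, Cauchy--Kovalevskaya (with Holmgren uniqueness) applied to real-analytic Cauchy data only yields a harmonic extension to a collar neighborhood of $\gamma$ of uncontrolled thickness; it does not extend $\Psi$ to all of $D^*$, and without that you cannot even define $\Sigma^*:=\Psi(D^*)$, let alone conclude that $\Sigma^*$ is conformally equivalent to $\Sigma$ as (a) requires. (Note also that the series in Lemma \ref{har1} converges globally only because it is applied to a finite Fourier sum $f_k$, whose Taylor series is entire; for general real-analytic data it converges only locally, so ``the series already used in Lemma \ref{har1}'' does not deliver even a uniform collar.) Second, the phrase ``the Schwarz-type reflection dictated by $\partial_Y x_i+x_i=0$'' names the object you need but does not produce it: the Robin condition couples $\phi_i$ and $\phi_i'$, so there is no pointwise reflection formula of the form $\phi_i(\bar z)=\bigl(\text{expression in }\overline{\phi_i(z)}\bigr)$, unlike the classical cases $\mathrm{Im}\,\phi=0$ or $\mathrm{Re}\,\phi=0$; existence of \emph{some} reflection adapted to this condition is exactly what has to be proved.

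The paper's device that fills this hole is worth internalizing: writing $\Phi_j$ holomorphic with $\mathrm{Re}\,\Phi_j=\phi_j$, the Robin condition becomes $\mathrm{Im}\left(i\Phi_j-\frac{\partial\Phi_j}{\partial Z}\right)=0$ on the $X$-axis, i.e.\ a genuine Schwarz condition for the auxiliary holomorphic function $\Lambda_j:=i\Phi_j-\partial\Phi_j/\partial Z$. Classical Schwarz reflection then extends $\Lambda_j$ to the whole doubled domain $H(D)\cup H(\delta)\cup H(D)^*$, and $\Phi_j$ is recovered \emph{globally} by solving the first-order linear ODE $i\Phi_j-\partial\Phi_j/\partial Z=\Lambda_j$ with integrating factor $e^{-iZ}$, giving $\Phi_j=-e^{iZ}\int e^{-iZ}\Lambda_j\,dZ$ on the full mirror domain, with the constants fixed so that $\mathrm{Re}\,\Phi_j=\phi_j$ on $H(D)$. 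This formula is what makes $\Sigma^*$ globally defined and conformally a copy of $\Sigma$ (with punctures, giving (c)), and it is the step your proposal would need to supply. The same remark applies to the annular case: you flag the single-valuedness issue but do not resolve it, whereas the paper settles it by observing that $\Psi^1(x+2\pi,y)-\Psi^1(x,y)$ is harmonic on the doubled strip and vanishes on the upper half, hence vanishes identically.
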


\begin{proof}
As in Section 2, there is a conformal map $\Psi$ from a half unit disk $D:=\{(x,y):x^2+y^2<1,\,0<y\}$ onto the simply connected $\Sigma$, mapping $\delta$, the diameter of $D$, onto $\gamma$. $\Psi$ is harmonic as well since $\Sigma$ is minimal. Assume that $Z=H(z)$ is a holomorphic function on $D$ that gives the isothermal parameters $X,Y$  with $Z=X+iY$ and $z=x+iy$ such that
$$ds^2=F_H(X,Y)^2(dX^2+dY^2)$$
and
\begin{equation}\label{onee}
F_H(X,Y)=1\,\,\,{\rm on}\,\,\, H(\delta)\subset\{Y=0\}.
\end{equation}
Since $\Sigma$ is perpendicular to $\partial B$ along $\gamma$, we have
\begin{equation}\label{steklov}\Psi:=(\psi_1,\psi_2,\psi_3)=\left(\frac{\partial\psi_1}{\partial\nu},
\frac{\partial\psi_2}{\partial\nu},\frac{\partial\psi_3}{\partial\nu}\right)\,\,\,{\rm along}\,\,\,\gamma,
\end{equation}
where $\nu$ is the outward unit conormal to $\partial\Sigma$ on $\Sigma$.
From (\ref{onee}) we see that
$$\nu=-\frac{\partial}{\partial Y}\,\,\,{\rm along}\,\,\,\gamma.$$
({\it This is the reason why we  first proved} Lemma \ref{ic}.)
Introducing a reparametrization
$$\Phi:=(\phi_1,\phi_2,\phi_3)=\Psi\circ H^{-1}:H(D)\rightarrow\Sigma,$$
 we see that
\begin{equation}\label{ddv}
(\phi_1,\phi_2,\phi_3)=-\left(\frac{\partial\phi_1}{\partial Y},\frac{\partial\phi_2}{\partial Y},\frac{\partial\phi_3}{\partial Y}\right)\,\,\,{\rm along}\,\,\,\gamma.
\end{equation}
Since $\phi_j$ is harmonic in $X$ and $Y$, we can find a holomorphic function $\Phi_j(Z)$ on $H(D)$ with ${\rm Re}\Phi_j=\phi_j,\, j=1,2,3$. %by $\phi_j^*(u,v)$ a harmonic conjugate of $\phi_j$, $j=1,2,3$,
Then
$$-\frac{\partial\phi_j}{\partial Y}={\rm Im}\frac{\partial\Phi_j}{\partial Z}.$$
Hence  on the $X$-axis \eqref{ddv} implies,
\begin{equation}\label{schwarz}
{\rm Im}\left(i\Phi_j-\frac{\partial\Phi_j}{\partial Z}\right)=0.
\end{equation}
Note that the left hand side ${\rm Im}(i\Phi_j-{\partial\Phi_j}/{\partial Z})$ is a harmonic function on $H(D)$ vanishing on $\partial( H(D))\subset X$-axis . So far, we have transformed the Steklov condition (\ref{steklov}) into the Schwarz condition (\ref{schwarz}). By the Schwarz reflection principle the holomorphic function $i\Phi_j-\partial\Phi_j/\partial Z$ on $H(D)$ has a holomorphic extension $$\Lambda_j:=\lambda_j+i\lambda_j^*\,\,\,\,\,{\rm  over}\,\,\,\,\, H(D)\cup H(\delta)\cup H(D)^*,$$ where $\lambda_j,\lambda_j^*$ are harmonic conjugates and $H(D)^*$ is the mirror image of $H(D)$ across the $X$-axis.

Does $ \Phi_j$ also have a holomorphic extension over $H(D)\cup H(\delta)\cup H(D)^*$? Yes, it surely does! One can obtain the holomorphic extension of $\Phi_j$ by solving the first-order linear differential equation
 $$i\Phi_j-\frac{\partial\Phi_j}{\partial Z}=\Lambda_j.$$
Clearly,
$$\frac{\partial}{\partial Z}(e^{-iZ}\Phi_j)=-e^{-iZ}\Lambda_j,$$
hence
$$\Phi_j=-e^{iZ}\int e^{-iZ}\Lambda_jdZ.$$
$\Phi_j$ involves two arbitrary constants: one arising from $\lambda_j$ and the other from the integration of $e^{iZ}\Lambda_j$. We choose the correct constants which give us ${\rm Re}\,\Phi_j|_{H(D)}=\phi_j$.
Therefore
$$\phi_j:={\rm Re}\left(-e^{iZ}\int e^{-iZ}\Lambda_jdZ\right)$$
is the desired harmonic extension of $\phi_j$ over $H(D)\cup H(\delta)\cup H(D)^*$.

From the symmetry of the holomorphic function $H$, we know that $H$ maps $D\cup\delta\cup D^*$ onto $H(D)\cup H(\delta)\cup H(D)^*$. Hence
$$\Psi^1:={\rm Re}(\Phi_1\circ H,\Phi_2\circ H, \Phi_3\circ H)$$
is a harmonic map defined on $D\cup\delta\cup D^*$ and is an analytic continuation of the original conformal harmonic map $\Psi:D\rightarrow\Sigma$. Composing with $H$ gives an analytic continuation in the original isothermal coordinates $x,y$.

Since $\Psi=(\psi_1,\psi_2,\psi_3):D\rightarrow\Sigma$ is conformal, we have
$$\left(\frac{d\psi_1}{dz}\right)^2+\left(\frac{d\psi_2}{dz}\right)^2+\left(\frac{d\psi_3}{dz}\right)^2=0\,\,\,\,{\rm in}\,\,\,\,D.$$
As $\Psi^1:=(\Psi_1^1,\Psi_2^1,\Psi_3^1)$ is the harmonic extension of $\Psi$ in $D\cup\delta\cup D^*$,
we also have
$$\left(\frac{d\Psi^1_1}{dz}\right)^2+\left(\frac{d\Psi^1_2}{dz}\right)^2+\left(\frac{d\Psi^1_3}{dz}\right)^2=0\,\,\,\,{\rm in}\,\,\,\,D\cup\delta\cup D^*.$$
Hence $\Psi^1$ is conformal as well. So $\Psi^1(D\cup\delta\cup D^*)$ is minimal.
Define
$$\Sigma^{double}=\Psi^1(D\cup\delta\cup D^*)\,\,\,\,{\rm and}\,\,\,\,\Sigma^*=\Psi^1(D^*).$$
Then $\Sigma^*$ is the desired spherical mirror image of $\Sigma$ that is conformally equivalent to $\Sigma$.

Suppose now that $\Sigma$ is a minimal annulus. There exists a periodic conformal harmonic map $\Psi$ of $\mathbb R\times(0,a)$ onto $\Sigma$ for some $a>0$. Then, as above, we can find $\Psi^1:\mathbb R\times(-a,a)\rightarrow\mathbb R^3$, an analytic continuation of $\Psi$. Here, we must show that $\Psi^1$ is periodic in $x$. Consider the vector-valued harmonic function on $\mathbb R\times(-a,a)$
$$\Delta(x,y):=\Psi^1(x+2\pi,y)-\Psi^1(x,y).$$
Then at $(x,y)\in \mathbb R\times(0,a)$ we have
\begin{eqnarray*}
\Delta(x,y)&=&\Psi(x+2\pi,y)-\Psi(x,y)\,\,\,\,\,\,\,[{\rm because}\,\,\Psi^1\equiv\Psi\,\,{\rm on}\,\, \mathbb R\times(0,a)]\\
&=&0\,\,\,\,\,\,\,[{\rm because}\,\, \Psi\,\,{\rm is\,\,periodic\,\,in}\,\,x\,\,{\rm with\,\,period\,\,}2\pi\,\,{\rm on}\,\, \mathbb R\times(0,a)].
\end{eqnarray*}
Hence $\Delta\equiv0$ on $\mathbb R\times(0,a)$ and since $\Delta$ is harmonic, $\Delta$ vanishes on $\mathbb R\times(-a,a)$ as well, meaning that $\Psi^1$ is also periodic in $x$ on $\mathbb R\times(-a,a)$ with period $2\pi$.

Define $\Sigma^{double}:=\Psi^1(\mathbb R\times(-a,a))$ and $\Sigma^*:=\Psi^1(\mathbb R\times(-a,0))$. Then $\Sigma^*$ is the desired { spherical mirror image} of $\Sigma$, and clearly, $\Sigma^*$ is conformally equivalent to $\Sigma$, possibly with punctures. Some punctures of $\Sigma^*$ will correspond to the ends because the conformal factor $F(X,Y)$ can become infinite at those punctures.

Whether $\Sigma$ is inside $B$ or outside $B$, $\Sigma$ can be reflected across $\partial B$ as long as its free boundary $\gamma$ lies inside $\partial B$. This is because (\ref{steklov}) and (\ref{ddv}) still hold (with the opposite sign) whether $\Sigma\subset B$ or $\Sigma\subset B^c$.
\end{proof}

\section{repeated reflections}
There is a substantial difference between the case where $\Sigma$ has only one free boundary $\gamma$ and the case where $\Sigma$ has two free boundary components $\gamma_1$ and $\gamma_2$. In the first case, $\Sigma$ can be extended only once across $\gamma$, but in the second case, it can be extended infinitely many times, across $\gamma_1$ and $\gamma_2$ alternatingly.

Let $\Sigma$ be a  minimal annulus with a free boundary $\gamma$ in a ball $B\subset\mathbb R^3$ and assume that  $\gamma$ is a closed loop.  There exists a periodic conformal harmonic map $\Psi$ from $V:=\mathbb R\times(0,a)$ onto $\Sigma$ such that $\Psi$ has period $2\pi$ in $x$ and maps the $x$-axis onto $\gamma$. As in Theorem \ref{reflection}, $\Psi$ is extended by the Schwarz reflection principle to $\Psi^1$ on $\mathbb R\times(-a,a)$ so that $\Psi^1(\mathbb R\times(-a,a))$ is the analytic continuation of $\Sigma$ containing the spherical mirror image of $\Sigma$, $\Sigma^*=\Psi^1(\mathbb R\times(-a,0))$. Let's call $\Psi^1(\mathbb R\times(-a,a))$ the {\it double extension } of $\Sigma=\Psi(\mathbb R\times (0,a))$ across $\gamma=\Psi(\mathbb R\times\{0\})$. Denote the double extension of $\Sigma$ across $\gamma$ by $\Sigma^\gamma$. One can say that $\gamma$ is conformally in the center of $\Sigma^\gamma$. Let's call $\gamma$ the {\it line of reflection}. $\Psi^1$ may have punctures in $\mathbb R\times(-a,a)$. In that case, the set of punctures of $\Psi^1$ will be denoted as ${\mathcal{P}}_1$ and $\Sigma^\gamma$ will be the image $\Psi^1(\mathbb R\times(-a,a)\setminus{\mathcal{P}}_1)$.

Suppose $\Sigma$ is a minimal annulus with two boundary components $\gamma_1$ and $\gamma_2$, free on $\partial B$. Suppose also that $\Psi$ is a periodic conformal harmonic map  from $V:=\mathbb R\times(0,a)$ onto $\Sigma$ with period $2\pi$ and  $\gamma_1=\Psi(\mathbb R\times\{0\})$. As above, $\Psi$ defined on $\mathbb R\times(0,a)$ extends to $\Psi^1$ on $\mathbb R\times(-a,a)\setminus{\mathcal{P}}_1$ by the reflection across $\gamma_1$, and $\Sigma$ has a double extension $\Sigma^{\gamma_1}=\Psi^1(\mathbb R\times(-a,a)\setminus{\mathcal{P}}_1)$. The starting point of the infinite reflections is that $\gamma_1$ is in the center of $\Sigma^{\gamma_1}$ whereas $\gamma_2$ is not. By alternating $\gamma_1$ and $\gamma_2$ as the line of reflection, one can extend $\Sigma$ infinitely many times.

Since $\gamma_2$ is a free boundary of $\Sigma^{\gamma_1}$, by Theorem \ref{reflection}  we can reflect $\Sigma^{\gamma_1}$ across $\gamma_2:=\Psi^1(\mathbb R\times \{a\})$ to get a periodic analytic continuation $\Psi^2$ of $\Psi^1$ defined on $\mathbb R\times(-a,3a)$. ($\Psi^2$ does not mean $\Psi\circ\Psi$.) Then $\Sigma^{\gamma_1\gamma_2}:=\Psi^2(\mathbb R\times(-a,3a))$ is the double extension of $\Sigma^{\gamma_1}$. More precisely, considering the set of punctures of $\Psi^2$ denoted ${\mathcal{P}}_2\subset\mathbb R\times(-a,3a)$, $\Sigma^{\gamma_1\gamma_2}$ will be $\Psi^2(\mathbb R\times(-a,3a)\setminus{\mathcal{P}}_2)$. This time $\gamma_2$ is in the center of $\Sigma^{\gamma_1\gamma_2}$ whereas $\gamma_1$ is not. Note here that $\gamma_1=\Psi(\mathbb R\times\{0\})$ is the free boundary of the subset $\Psi^2(\mathbb R\times(0,3a))$ of $\Sigma^{\gamma_1\gamma_2}$. So let's apply Theorem \ref{reflection}  to $\Psi^2(\mathbb R\times(0,3a))$ to extend $\Psi^2$ to $\Psi^3$ periodically on the strip $\mathbb R\times(-3a,3a)\setminus{\mathcal{P}}_3$. Hence $\Sigma^{\gamma_1^2\gamma_2}:=\Psi^3(\mathbb R\times(-3a,3a)\setminus{\mathcal{P}}_3)$ is the double extension of $\Psi^2(\mathbb R\times(0,3a))$ and is an analytic continuation of $\Sigma^{\gamma_1\gamma_2}$. Now $\gamma_2$ is not in the center of $\Sigma^{\gamma_1^2\gamma_2}$ and is the free boundary of the subset $\Psi^3(\mathbb R\times(-3a,a)$, so again we apply Theorem \ref{reflection}  to get a periodic conformal harmonic map $\Psi^4$ defined on $\mathbb R\times(-3a,5a)\setminus{\mathcal{P}}_4$. $\Sigma^{\gamma_1^2\gamma_2^2}:=\Psi^4(\mathbb R\times(-3a,5a)\setminus{\mathcal{P}}_4)$ is an analytic continuation of $\Sigma^{\gamma_1^2\gamma_2}$.

From here, let's proceed by induction. Suppose there is a periodic conformal harmonic map $\Psi^{2k}$ defined on $\mathbb R\times(-(2k-1)a,(2k+1)a)\setminus{\mathcal{P}}_{2k}$ with period $2\pi$ and extending the original $\Psi$ on $\mathbb R\times(0,a)$. Denote $\Sigma^{\gamma_1^{k}\gamma_2^{k}}:=\Psi^{2k}(\mathbb R\times(-(2k-1)a,(2k+1)a)\setminus{\mathcal{P}}_{2k})$. $\gamma_1$ is not in the center of $\Sigma^{\gamma_1^{k}\gamma_2^{k}}$ and is the free boundary of $\Psi^{2k}(\mathbb R\times(0,(2k+1)a))$. So using Theorem \ref{reflection} , we can reflect $\Psi^{2k}(\mathbb R\times(0,(2k+1)a))$ across $\gamma_1$ and extend $\Psi^{2k}$  to $\Psi^{2k+1}$ periodically on $\mathbb R\times(-(2k+1)a,(2k+1)a)\setminus{\mathcal{P}}_{2k+1}$ with period $2\pi$. Then $\Sigma^{\gamma_1^{k+1}\gamma_2^{k}}:=\Psi^{2k+1}(\mathbb R\times(-(2k+1)a,(2k+1)a)\setminus{\mathcal{P}}_{2k+1})$ is an analytic continuation of $\Sigma^{\gamma_1^{k}\gamma_2^{k}}$. Again reflect $\Psi^{2k+1}(\mathbb R\times(-(2k+1)a,a))$ across its free boundary $\gamma_2$ by Theorem \ref{reflection}  to get a periodic analytic continuation $\Psi^{2k+2}$ of $\Psi^{2k+1}$ to $\mathbb R\times(-(2k+1)a,(2k+3)a)\setminus{\mathcal{P}}_{2k+2}$ with period $2\pi$ and with $\Sigma^{\gamma_1^{k+1}\gamma_2^{k+1}}:=\Psi^{2k+2}(\mathbb R\times(-(2k+1)a,(2k+3)a)\setminus{\mathcal{P}}_{2k+2})$. Obviously  $\Sigma^{\gamma_1^{k+1}\gamma_2^k}\subset\Sigma^{\gamma_1^{k+1}\gamma_2^{k+1}}$.

Define a conformal harmonic map $\widetilde{\Psi}^n$ on the annulus $\{w\in\mathbb C:e^{-(n-1)a}<|w|<e^{(n-1)a}\}$ by
$$\widetilde{\Psi}^{n}(w):=\Psi^n(i\log w).$$
Even though $\log w$ is many-valued, $\widetilde{\Psi}^n(w)$ is well-defined because $\Psi^n(x+iy)$ is periodic in $x$ with period $2\pi$.
Hence we have an increasing sequence of minimal surfaces $\{\widetilde{\Sigma}^n\}$ defined by
$$\widetilde{\Sigma}^n:=\widetilde{\Psi}^n\left(\{w:e^{-(n-1)a}<|w|<e^{(n-1)a}\}\right),\,\,\,
\Sigma\subset\widetilde{\Sigma}^2\subset\widetilde{\Sigma}^3\subset\cdots\subset\widetilde{\Sigma}^n\subset\cdots.$$
It follows that $$\widetilde{\Sigma}^n=\Psi^n(\mathbb R\times(-(n-1)a,(n-1)a)\setminus{\mathcal{P}}_n)\,\,\,\,\, {\rm and} \,\,\,\,\,
\widetilde{\Sigma}^{2k}\subset\Sigma^{\gamma_1^k\gamma_2^k}\subset\widetilde{\Sigma}^{2k+2}.$$
So the limiting surface
$$\widetilde{\Sigma}:=\lim_{n\rightarrow\infty}\widetilde{\Sigma}^n$$
exists and equals $\lim_{k\rightarrow\infty}\Sigma^{\gamma_1^k\gamma_2^k}$. Furthermore $\widetilde{\Sigma}$ is conformally equivalent to $\mathbb R^2,\,\mathbb S^1\times\mathbb R$, or $\mathbb S^2$ with punctures. The corresponding conformal harmonic map should also exist:
 $$\widetilde{\Psi}:=\lim_{n\rightarrow\infty}\widetilde{\Psi}^n.$$
One can see that
$$\widetilde{\Sigma}=\widetilde{\Psi}\left(\mathbb R^2\setminus(\{O\}\cup{\widetilde{\mathcal{P}}})\right),\,\,\,\widetilde{\mathcal{P}}:=\bigcup_n\bigcup_{z\in\mathcal{P}_n}e^{-iz}
.$$

\section{critical catenoid}
So far, the free boundary property has been the key to establishing the reflection principle. Moreover, the complex function theory has been the primary tool in constructing the minimal surface $\widetilde{\Sigma}$. Henceforth, we will show that $\widetilde{\Sigma}$ is the catenoid if the original $\Sigma$ is embedded. Then $\Sigma$ will have to be the critical catenoid. From here, differential geometry will be the primary tool.

In this section we will follow the arguments of \cite{DHKW} and use their notations. Let $X$ be a smooth surface (as a map) with isothermal coordinates  $u,v$ such that $w=u+iv$ is a complex coordinate on $X$. Denote the metric of $X$ by $ds^2=\Lambda(u,v)(du^2+dv^2)$. Let $\vec{n}$ be a unit normal to $X$ in $\mathbb R^3$ and let $\mathcal{L},\mathcal{M},\mathcal{N}$ be the components of the second fundamental form of  $X$, $H$ the mean curvature, and $K$ the Gaussian curvature of $X$. We have the following from the Lemma of Section 1.3 in \cite{DHKW}.

\begin{lemma}\label{DHKW}
$X$ satisfies
$$X_{uu}=\frac{\Lambda_u}{2\Lambda}X_u-\frac{\Lambda_v}{2\Lambda}X_v+\mathcal{L}\,\vec{n}\,\,\,$$
\begin{equation}\label{rectangle}X_{uv}=\frac{\Lambda_v}{2\Lambda}X_u+\frac{\Lambda_u}{2\Lambda}X_v+\mathcal{M}\,\vec{n}
\end{equation}
$$\,\,\,X_{vv}=-\frac{\Lambda_u}{2\Lambda}X_u+\frac{\Lambda_v}{2\Lambda}X_v+\mathcal{N}\,\vec{n}$$
$$\,\,\,\,\,\,\,H=\frac{\mathcal{L}+\mathcal{N}}{2\Lambda},\,\,\,\,\,\,K=\frac{\mathcal{L}\mathcal{N}-\mathcal{M}^2}{\Lambda^2}$$
\begin{equation}\label{holo}
\left[\frac{1}{2}(\mathcal{L}-\mathcal{N})-i\mathcal{M}\right]_{\bar{w}}=\Lambda H_w.
\end{equation}
Define $$f(w):=\frac{1}{2}(\mathcal{L}-\mathcal{N})-i\mathcal{M},\,\,\,\,\,\,\alpha:={\rm Re}\,[w^2f(w)],\,\,\,\,\,\,\beta:={\rm Im}\,[w^2f(w)].$$
{\rm (\ref{holo})} implies that $f(w)$ is holomorphic if the surface $X$ has constant mean curvature.

Let $\rho,\theta$ be the polar coordinates on $\mathbb R^2$ such that $w=u+iv=\rho e^{i\theta}$. Then {\rm (\ref{rectangle})} can be rewritten as
$$\,\,\,\,\,\,\,\,X_{\rho\rho}=\frac{\Lambda_\rho}{2\Lambda}X_\rho-\frac{1}{\rho}\frac{\Lambda_\theta}{2\Lambda}\frac{1}{\rho}X_\theta+\left( \frac{\alpha}{\rho^2}+\Lambda H\right)\vec{n}$$
\begin{equation}\label{polar}\frac{1}{\rho}X_{\rho\theta}=\frac{1}{\rho}\frac{\Lambda_\theta}{2\Lambda}X_\rho+\left(\frac{1}{\rho}+
\frac{\Lambda_\rho}{2\Lambda}\right)\frac{1}{\rho}X_\theta-\frac{\beta}{\rho^2}\,\vec{n}
\end{equation}
 $$\hspace{2.19cm}\frac{1}{\rho^2}X_{\theta\theta}=-\left(\frac{1}{\rho}+
\frac{\Lambda_\rho}{2\Lambda}\right)X_\rho+
\frac{1}{\rho}\frac{\Lambda_\theta}{2\Lambda}\frac{1}{\rho}X_\theta-\left(\frac{\alpha}{\rho^2}-\Lambda H\right)\vec{n}.$$
\end{lemma}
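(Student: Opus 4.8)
The plan is to derive the entire lemma from the Gauss and Codazzi--Mainardi equations specialized to the conformal metric $g_{11}=g_{22}=\Lambda$, $g_{12}=0$. First I would record the Christoffel symbols in these isothermal coordinates, namely $\Gamma^1_{11}=-\Gamma^1_{22}=\Gamma^2_{12}=\Lambda_u/2\Lambda$ and $-\Gamma^2_{11}=\Gamma^1_{12}=\Gamma^2_{22}=\Lambda_v/2\Lambda$, these being the only independent ones. Substituting them into the Gauss formula $X_{ij}=\Gamma^k_{ij}X_k+h_{ij}\vec n$, with $h_{11}=\mathcal L$, $h_{12}=\mathcal M$, $h_{22}=\mathcal N$, reproduces the three equations for $X_{uu}$, $X_{uv}$, $X_{vv}$ in \eqref{rectangle}. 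The formulas $H=\tfrac12 g^{ij}h_{ij}=(\mathcal L+\mathcal N)/2\Lambda$ and $K=\det(h_{ij})/\det(g_{ij})=(\mathcal L\mathcal N-\mathcal M^2)/\Lambda^2$ are then immediate from $g^{11}=g^{22}=1/\Lambda$ and $g^{12}=0$.

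For \eqref{holo} I would invoke the Codazzi--Mainardi equations, which in these coordinates reduce to $\mathcal L_v-\mathcal M_u=\tfrac{\Lambda_v}{2\Lambda}(\mathcal L+\mathcal N)$ and $\mathcal M_v-\mathcal N_u=-\tfrac{\Lambda_u}{2\Lambda}(\mathcal L+\mathcal N)$, obtained by contracting $\nabla_i h_{jk}=\nabla_j h_{ik}$ against the Christoffel symbols above. Writing $\partial_{\bar w}=\tfrac12(\partial_u+i\partial_v)$, I would expand $\partial_{\bar w}f$ for $f=\tfrac12(\mathcal L-\mathcal N)-i\mathcal M$ and separate parts: the real part collects to $\tfrac12(\mathcal M_v-\mathcal N_u)+\tfrac14\partial_u(\mathcal L+\mathcal N)$ and the imaginary part to $\tfrac12(\mathcal L_v-\mathcal M_u)-\tfrac14\partial_v(\mathcal L+\mathcal N)$. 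The two Codazzi identities then convert the leading combinations into exactly the $\Lambda$-derivative terms appearing in $\Lambda H_w=\Lambda\,\partial_w\!\big((\mathcal L+\mathcal N)/2\Lambda\big)$, which establishes \eqref{holo}. The corollary that $f$ is holomorphic when $H$ is constant is then immediate, since $H_w=0$ forces $\partial_{\bar w}f=0$.

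Finally, for the polar rewriting I would change variables by $u=\rho\cos\theta$, $v=\rho\sin\theta$, under which the metric becomes $\Lambda(d\rho^2+\rho^2 d\theta^2)$. Computing the Christoffel symbols of this orthogonal metric and inserting them into the Gauss formula in the $(\rho,\theta)$-frame yields the tangential terms of \eqref{polar} after dividing each equation by the appropriate power of $\rho$. For the normal components I would transform the second fundamental form tensorially, obtaining $h_{\rho\rho}=\cos^2\theta\,\mathcal L+\sin2\theta\,\mathcal M+\sin^2\theta\,\mathcal N$ together with the analogous expressions for $h_{\rho\theta}$ and $h_{\theta\theta}$. The double-angle identities recast these as $\Lambda H\pm\alpha/\rho^2$ and $-\beta/\rho$ once one observes that $\alpha/\rho^2=\mathrm{Re}[e^{2i\theta}f]=\tfrac12(\mathcal L-\mathcal N)\cos2\theta+\mathcal M\sin2\theta$ and $\beta/\rho^2=\mathrm{Im}[e^{2i\theta}f]$, matching \eqref{polar} term by term.

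The step I expect to be the main obstacle is \eqref{holo}: the bookkeeping in expanding $\partial_{\bar w}f$ and substituting the two Codazzi identities so that the $\Lambda$-derivative terms assemble correctly into the single expression $\Lambda H_w$ is where sign and factor errors are most likely to creep in. By contrast, parts (1), (2) and the polar conversion are essentially direct substitutions once the relevant Christoffel symbols are in hand.
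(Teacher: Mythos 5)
Your proposal is correct: the Christoffel symbols of the conformal metric, the two Codazzi--Mainardi identities, and the tensorial transformation of the second fundamental form to polar coordinates all check out, and they assemble into \eqref{rectangle}, \eqref{holo} and \eqref{polar} exactly as you describe (including the signs $h_{\rho\theta}=-\beta/\rho$ and $h_{\theta\theta}=\rho^2\Lambda H-\alpha$). The paper itself gives no proof of this lemma --- it is imported wholesale from the Lemma of Section 1.3 of \cite{DHKW} --- and your argument is precisely the standard derivation behind that cited result, so you have simply filled in the computation the paper outsources.
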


\begin{lemma}\label{K}
Let $\Sigma$ be an immersed minimal annulus in a unit ball $B\subset\mathbb R^3$ with free boundary $\partial\Sigma\subset\partial B$. If $\widetilde{\Sigma}$ is the analytic continuation of $\Sigma$ obtained after infinite reflections as in the preceding section, then the Gaussian curvature $K$ is nowhere zero on $\widetilde{\Sigma}$.
\end{lemma}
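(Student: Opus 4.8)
The plan is to translate the statement $K\neq0$ into a statement about the holomorphic Hopf differential and then exploit the dilation symmetry that the infinite reflections force on $\widetilde\Sigma$. First I would record that since $\widetilde\Sigma$ is minimal, $H\equiv0$, so $\mathcal N=-\mathcal L$ and, with $f=\frac12(\mathcal L-\mathcal N)-i\mathcal M=\mathcal L-i\mathcal M$ as in Lemma \ref{DHKW}, one has $K=(\mathcal L\mathcal N-\mathcal M^2)/\Lambda^2=-|f|^2/\Lambda^2$. By (\ref{holo}) with $H_w=0$ the function $f$ is holomorphic, so $K$ vanishes at a point exactly when $f$ does; hence it suffices to prove that $f$ has no zero on $\widetilde\Sigma$. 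I would work in the global conformal coordinate $w$ of Section 4, in which $\widetilde\Sigma$ is $\mathbb C^*$ (the case $\mathbb S^1\times\mathbb R$, with the two ends sitting at $w=0$ and $w=\infty$), the reflection loops $\gamma_1,\gamma_2$ are the circles $|w|=1$ and $|w|=e^{a}$, and $f$ is holomorphic on all of $\mathbb C^*$.

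Next I would use the free boundary condition to obtain a reality condition on these two circles. Because $\partial B$ is totally umbilic, every curve on it is a line of curvature, so by Joachimsthal's theorem the orthogonal intersection $\gamma_i=\Sigma\cap\partial B$ is a line of curvature of $\Sigma$ as well (equivalently, differentiating the free boundary relation (\ref{ddv}) tangentially along $\gamma$ gives $X_{uv}=-X_u$, whence $\mathcal M=X_{uv}\cdot\vec n=0$ there). In the polar structure equations (\ref{polar}) the normal component of $X_{\rho\theta}$ is $-\beta/\rho^2$ with $\beta=\mathrm{Im}(w^2f)$, and the circles $|w|=\mathrm{const}$ are precisely the $\theta$-curves; so $\gamma_i$ being a line of curvature is exactly the condition $\beta=0$ on it. Therefore the holomorphic function $Q(w):=w^2f(w)$ is real-valued on both circles $|w|=1$ and $|w|=e^{a}$.

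Now comes the symmetry argument. Reflecting the scalar holomorphic function $Q$ across the circle $|w|=1$ (Schwarz reflection in a circle, $w\mapsto 1/\bar w$) gives $Q(w)=\overline{Q(1/\bar w)}$, and reflection across $|w|=e^{a}$ gives $Q(w)=\overline{Q(e^{2a}/\bar w)}$; composing the two anticonformal involutions produces the dilation $w\mapsto e^{2a}w$ and yields $Q(e^{2a}w)=Q(w)$. Expanding $Q$ in its Laurent series $\sum_n c_nw^n$ on $\mathbb C^*$, invariance forces $c_n(e^{2an}-1)=0$ for every $n$; since $a>0$ this kills every $c_n$ with $n\neq0$, so $Q\equiv c$ is constant. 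Hence $f(w)=c/w^2$, which is nowhere zero on $\mathbb C^*$ as long as $c\neq0$. Finally $c\neq0$, since $c=0$ would make $\Sigma$ totally geodesic, i.e. a piece of an equatorial plane, and such a planar annulus cannot have both of its boundary circles lying on $\partial B$; this is excluded for a free boundary annulus. Thus $K=-|f|^2/\Lambda^2<0$ everywhere, and the resulting $f=c/w^2$ is exactly the Hopf differential of the catenoid, as expected.

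The \textbf{main obstacle} is the step hidden in the first paragraph: that $f$ (equivalently $Q$) is holomorphic on all of $\mathbb C^*$ with only the two ends at $0,\infty$, i.e. that the infinitely many reflections create no interior puncture at which $f$ acquires a pole. This is precisely where immersedness of $\Sigma$ enters: at a regular point the second fundamental form is finite, so the only candidate poles of $Q$ lie at punctures where the conformal factor $F$ blows up, and one must verify that these are genuine ends ($w=0,\infty$) rather than interior points of $\mathbb C^*$. The reason this is the crux is structural: if $Q$ had even one interior pole it would be a nonconstant $e^{2a}$-invariant (elliptic-type) function and would therefore also have a zero, producing a flat point and contradicting the lemma; so the whole content of the lemma is the absence of such interior poles, after which the Laurent argument closes everything. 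A secondary point to make fully rigorous is the reality of $Q$ on the two circles from the line-of-curvature property, which I would anchor on Joachimsthal's theorem together with the polar equations (\ref{polar}).
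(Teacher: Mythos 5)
Your first half coincides with the paper's: the free boundary condition makes $\gamma_1,\gamma_2$ lines of curvature, which via the polar structure equations (\ref{polar}) says exactly that $\beta=\mathrm{Im}(w^2f)$ vanishes on the two circles; that step is correct (the paper derives it by differentiating $X_\rho=\sqrt{\Lambda}\,X$ in $\theta$ and comparing with (\ref{polar}), you invoke Joachimsthal, same content). The genuine gap is the one you yourself flag as the ``main obstacle'' and then leave open: you assume that $\widetilde{\Sigma}$ is all of $\mathbb{C}^*$ and that $f$ is holomorphic on all of $\mathbb{C}^*$, and your reflection identities $Q(w)=\overline{Q(1/\bar w)}$, $Q(w)=\overline{Q(e^{2a}/\bar w)}$ only make sense once such a global extension is in hand. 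This is not available at this stage, and in the stated generality it is not even true: Lemma \ref{K} assumes $\Sigma$ merely immersed, and for the immersed (non-embedded) Fern\'andez--Hauswirth--Mira annuli the extension $\widetilde{\Sigma}$ has infinitely many ends, i.e.\ the interior puncture set $\widetilde{\mathcal{P}}\subset\mathbb{C}^*$ is infinite (see the paper's final Remark). The emptiness of $\widetilde{\mathcal{P}}$ is established only later, in Lemma \ref{5.5}, only under embeddedness, and using the present lemma as an ingredient. So ``verify there are no interior punctures'' is not a fillable hole; and ``$f$ extends holomorphically across the punctures'' is only known a posteriori, once one knows $Q$ is constant --- assuming it up front is circular.

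The repair is to run the argument on the compact annulus $\Sigma$ itself, where $Q=w^2f$ is manifestly holomorphic (a smooth immersion up to its analytic free boundary), and pass to $\widetilde{\Sigma}$ only at the end. That is what the paper does: $\beta$ is harmonic on $1\le|w|\le e^a$ and vanishes on both boundary circles, so $\beta\equiv0$ on $\Sigma$ by the maximum principle, hence $Q\equiv c$ (a real constant) on $\Sigma$; since the parameter domain $\mathbb{C}^*\setminus\widetilde{\mathcal{P}}$ of $\widetilde{\Psi}$ is connected, the identity theorem gives $Q\equiv c$ on all of $\widetilde{\Sigma}$, punctures or not, whence $K=-|c/w^2|^2F^{-4}<0$ everywhere (with $c\neq0$ exactly as you argue). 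Your reflection-plus-Laurent computation can be saved the same way: apply Schwarz reflection to $Q|_\Sigma$ as pure scalar function theory --- the reflection \emph{constructs} a holomorphic, dilation-invariant extension to $\mathbb{C}^*$ rather than presupposing that the geometric surface supplies one --- conclude from the Laurent series that $Q|_\Sigma$ is constant, then propagate by unique continuation. With that reordering your argument is a valid alternative to the paper's maximum-principle step. Note also that your closing remark that ``the whole content of the lemma is the absence of interior poles'' has the logic backwards: the lemma's content is the constancy of $Q$, which is decided entirely on the compact annulus by its boundary data, and it holds whether or not interior punctures exist.
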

\begin{proof}
(\ref{polar}) will give important information on the free boundary $\gamma_1\cup\gamma_2$ of $\Sigma$. Recall that $\rho=1$ on $\gamma_1$ and $\rho=e^a$ on $\gamma_2$. First, we know that
$$X_\rho=\sqrt{\Lambda}\,X\,\,\,{\rm on}\,\,\,\gamma_1\cup\gamma_2.$$
Next, we differentiate this equation with respect to $\theta$:
\begin{equation}\label{rt}
X_{\rho\theta}=(\sqrt{\Lambda})_\theta X+\sqrt{\Lambda}X_\theta=
\frac{(\sqrt{\Lambda})_{\theta}}{\sqrt{\Lambda}}X_\rho+\sqrt{\Lambda}X_\theta\,\,\,{\rm on}\,\,\,\gamma_1\cup\gamma_2.
\end{equation}
Now let's apply Lemma \ref{DHKW} when $X=\widetilde{\Psi}$ and $\Lambda=F^2$. Then $w^2f(w)=\alpha+i\beta$ is a holomorphic function on $\widetilde{\Sigma}$. Compare (\ref{rt}) with (\ref{polar}) to get
$$\frac{1}{\rho}+\frac{F_\rho}{F}={F}\,\,\,{\rm and}\,\,\,\beta=0\,\,\,{\rm on}\,\,\,\gamma_1\cup\gamma_2.$$
Hence $\beta\equiv0$ on $\Sigma$. Remember that $\beta$, being harmonic, extends to $\widetilde{\Sigma}$. Therefore $\beta\equiv0$ on $\widetilde{\Sigma}$ as well, and hence $\alpha$ is a constant $c$ on $\widetilde{\Sigma}$. $c$ is nonzero because otherwise $\Sigma$ would be flat. (See Theorem, p.343, \cite{DHKW}.)
By Lemma \ref{DHKW}
$$K=-\frac{\mathcal{L}^2+\mathcal{M}^2}{F^4}=-\frac{|f|^2}{F^4}=-\left|\frac{\alpha+i\beta}{w^2}\right|^2 \frac{1}{F^4}=-\left|\frac{c}{w^2}\right|^2\frac{1}{F^4}.$$
Therefore $K<0$ everywhere on $\widetilde{\Sigma}$ because $0<|w|<\infty$ on $\widetilde{\Sigma}$.
\end{proof}

A point of a minimal surface is called a flat point if the Gaussian curvature $K$ vanishes at that point. The flat points are isolated on a minimal surface. If a minimal surface is in $\mathbb R^3$, then the flat point is a  point at which the derivative of the  Gauss map vanishes. Hence Lemma \ref{K} implies that the Gauss map is a local diffeomorphism everywhere on $\widetilde{\Sigma}$. Let $\widetilde{\Psi}$ be the immersion from $\mathbb R^2\setminus(\{O\}\cup{\widetilde{\mathcal{P}}})$ onto $\widetilde{\Sigma}$ and $G$ the Gauss map from $\widetilde{\Sigma}$ to $\mathbb S^2$. Then $G\circ\widetilde{\Psi}$ is a covering map from $\mathbb R^2\setminus(\{O\}\cup{\widetilde{\mathcal{P}}})$ onto its image $G\circ\widetilde{\Psi}(\mathbb R^2\setminus(\{O\}\cup{\widetilde{\mathcal{P}}}))\subset\mathbb S^2$. $G\circ\widetilde{\Psi}$ is also a conformal map(with the opposite orientation of $\mathbb S^2$).
\begin{definition}
Denote by $n,s$ the north and south poles of $\mathbb S^2$, respectively. Let $\pi_s$ be the stereographic projection from $\mathbb S^2\setminus\{n,s\}$ onto $\mathbb R^2\setminus\{O\}$, mapping a neighborhood of $s$ to a neighborhood of $O$. Then $G\circ\widetilde{\Psi}\circ\pi_s$ is a conformal covering map from $\mathbb S^2\setminus(\{n,s\}\cup\pi_s^{-1}(\widetilde{\mathcal{P}}))$ into $\mathbb S^2$. \\
\end{definition}
The free boundary $\partial\Sigma$ is the union of lines of curvatures $\gamma_1,\gamma_2$ on $\partial B$. Since $K$ is negative on $\partial\Sigma$, the principal curvatures are nonzero along $\partial\Sigma$. Hence both $\gamma_1$ and $\gamma_2$ are locally strictly convex(in the opposite directions) on $\partial B$. Since  $G\circ\widetilde{\Psi}\circ\pi_s$ is a conformal covering map from $\mathbb S^2\setminus(\{n,s\}\cup\pi_s^{-1}(\widetilde{\mathcal{P}}))$ into $\mathbb S^2$, $ G\circ\widetilde{\Psi}\circ\pi_s$ extends across the punctures $\{n,s\}\cup\pi_s^{-1}(\widetilde{\mathcal{P}})$ to a holomorphic map from $\mathbb S^2$ to $\mathbb S^2$.

\begin{lemma}
$\widetilde{\Sigma}$ is complete.
\end{lemma}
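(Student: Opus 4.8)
The plan is to read off the induced metric of $\widetilde{\Sigma}$ from the Weierstrass representation and then verify that each of its ends lies at infinite distance. By Lemma \ref{K} we already know that $w^2f(w)\equiv c$ for a nonzero constant $c$, where $f=\tfrac12(\mathcal L-\mathcal N)-i\mathcal M$ is the Hopf coefficient; and in the discussion preceding this lemma it was shown that the Gauss map, transported by $G\circ\widetilde{\Psi}\circ\pi_s$, extends to a holomorphic self-map of $\mathbb S^2$. Writing $g$ for the stereographic image of the Gauss map as a function of $w$, this says precisely that $g$ is a \emph{rational} function on $\mathbb S^2$ of some finite degree $d\ge1$. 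Let $(g,\eta)$ be Weierstrass data for $\widetilde{\Psi}$, so that $F=\tfrac12|\eta|\,(1+|g|^2)$ while the Hopf coefficient satisfies $|f|=|\eta|\,|g'|$. Eliminating $|\eta|$ and inserting $|f|=|c|/|w|^2$ gives the explicit conformal factor
\begin{equation*}
F(w)=\frac{|c|\,(1+|g|^2)}{2\,|w|^2\,|g'|},
\end{equation*}
so the arc length element is $ds=F\,|dw|$. Since $g$ is rational it has only finitely many critical points and poles, whence the set of points where $F\to\infty$ — and in particular $\widetilde{\mathcal P}$ — is finite. Completeness will follow once I show that $\int ds=\infty$ along every path in $\mathbb R^2\setminus(\{O\}\cup\widetilde{\mathcal P})$ that leaves every compact set.

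First I would analyze $ds$ near each point of the ideal boundary, which consists of $w=0$, $w=\infty$, and the finitely many punctures $\widetilde{\mathcal P}$. Because $g$ is rational, at any such point $w_0$ the quantities $g$ and $g'$ are algebraic, so $F\sim C\,|w-w_0|^{-\beta}$ for some $C>0$ and an integer $\beta$. At $w=0$ the factor $|w|^{-2}$ forces $F\ge C|w|^{-2}$ whatever the (finite or polar) value of $g$ there, so $\beta\ge2$. At $w=\infty$, setting $\zeta=1/w$ one checks that the two powers of $|w|$ only partially absorb the Jacobian $|dw|=|\zeta|^{-2}|d\zeta|$, leaving $ds\ge C|\zeta|^{-2}|d\zeta|$, again $\beta\ge2$. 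Finally, a puncture $p\in\widetilde{\mathcal P}$ is by construction a finite $p\neq0$ at which $F\to\infty$; from the formula this can happen only through a zero of $g'$ (a branch point of the Gauss map, $g$ finite) or a pole of $g$ of order $\ge2$. The local expansion then yields $\beta=\mathrm{ord}_p(g')\ge1$ in the first case and $\beta=(\text{pole order})-1\ge1$ in the second. Thus at every ideal point $ds\ge C\,|w-w_0|^{-1}|dw|$ in the appropriate chart.

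Given this lower bound, completeness is immediate. Along any path tending to an ideal point $w_0$ one has
\begin{equation*}
\int ds\ \ge\ C\int\frac{|dw|}{|w-w_0|}\ \ge\ C\int\bigl|d\log|w-w_0|\bigr|\longrightarrow\infty,
\end{equation*}
because $\log|w-w_0|\to-\infty$. A divergent path that does not eventually stay near a single ideal point must instead cross the compact core $\mathbb R^2\setminus(\{O\}\cup\widetilde{\mathcal P})$ minus small neighborhoods of the ideal points infinitely often; since $F$ is bounded below by a positive constant on this compact set, such a path also has infinite length. Hence no divergent path has finite length and $\widetilde{\Sigma}$ is complete. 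The one step that genuinely uses the hypotheses is the \emph{quantitative} growth $F\ge C|w-w_0|^{-1}$ at the punctures: the bare statement ``$F\to\infty$'' coming from the reflection construction does not suffice, and I expect the main work to lie in justifying, via the rationality of the extended Gauss map $g$, that every blow-up of $F$ has order at least one. The ends $w=0,\infty$, by contrast, are automatically complete thanks to the geometric factor $|w|^{\pm2}$.
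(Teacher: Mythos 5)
Your proof is correct, and it takes a genuinely different route from the paper's. The paper invokes Palais--Terng to get line-of-curvature isothermal coordinates with $I=\kappa^{-1}(dx^2+dy^2)$, then argues by contradiction along the radial rays $w(\ell_{a,b})$: finite length would force $\kappa\to\infty$, making the Gauss-map image of the ray infinitely long, which is impossible because $G\circ\widetilde{\Psi}\circ\pi_s$ extends to a holomorphic self-map of $\mathbb{S}^2$; curves running into punctures are treated by a separate removable-singularity argument for the bounded harmonic coordinate functions. You instead solve for the conformal factor explicitly by combining Lemma \ref{K} ($w^2f\equiv c\neq0$, $|K|=|f|^2/F^4$) with the conformality of the Gauss map, obtaining $F=|c|(1+|g|^2)/\bigl(2|w|^2|g'|\bigr)$, and then exploit rationality of $g$ (the same extension fact the paper uses) to get the polar lower bound $F\geq C|w-w_0|^{-1}$ at every ideal point. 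This buys a uniform estimate valid for \emph{all} divergent curves rather than just radial rays, yields finiteness of $\widetilde{\mathcal{P}}$ as a byproduct, and replaces the paper's delicate step of inferring boundedness of the coordinates in a whole neighborhood of a puncture from finite length of a single curve; what it costs is the extra Weierstrass-type identity $|f|=|\eta|\,|g'|$ (which is correct, and can even be bypassed by equating the two curvature formulas directly). One cosmetic point: in your last paragraph, a divergent path by definition eventually leaves every compact set permanently, so for small $\varepsilon$ it must end up in the $\varepsilon$-neighborhood of a single ideal point; the alternative of "crossing the compact core infinitely often" cannot actually occur, and your logarithmic estimate alone finishes the proof.
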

\begin{proof}
As $K$ vanishes nowhere on $\widetilde{\Sigma}$, by Theorem 3.1.1 of \cite{PT}, there exist global isothermal coordinates $x,y$ on $\widetilde{\Sigma}$ whose coordinate curves are the lines of curvature. Moreover, $x$ is periodic on $\widetilde{\Sigma}$ with period $2\pi$, and the two fundamental forms can be expressed in terms of the principal curvature $\kappa>0$ as follows:
\begin{equation}\label{1stff}
I= \frac{1}{\kappa}(dx^2+dy^2),\,\,\,\,\,\,\,\,\,\,II=(dx^2-dy^2).
\end{equation}

For $0\leq a< 2\pi$ and any $b$, define $\ell_{a,b}:=\{(a,y):b<y\}$ and $w(z):=e^{-iz}$. $w(\ell_{a,b})$ are the rays going off to $\infty$ for all $a,b$. We claim that the length $L(\widetilde{\Psi}\circ w(\ell_{a,b}))$ of the curve $\widetilde{\Psi}\circ w(\ell_{a,b})$ on $\widetilde{\Sigma}$ is infinite for any $w(\ell_{a,b})$. Suppose $L(\widetilde{\Psi}\circ w(\ell_{c,d}))$ is finite for some $\ell_{c,d}$. Using (\ref{1stff}), one can compute $L(\widetilde{\Psi}\circ w(\ell_{c,d}))$:
$$L(\widetilde{\Psi}\circ w(\ell_{c,d}))=\int_d^\infty\frac{1}{\sqrt{\kappa(c,y)}}dy.$$ This integral can be finite only if
\begin{equation}\label{princ}
\lim_{y\rightarrow\infty}\kappa({c,y})=\infty.
\end{equation}
 When the Gauss map $G$ maps $\widetilde{\Sigma}$ into $\mathbb S^2$, $G$ expands the length of the curve $\widetilde{\Psi}\circ w(\ell_{c,d})$ by the factor of $\kappa(c,y)$ at the point $\widetilde{\Psi}\circ w(\ell_{c,y}))$. Hence one can compute the length of $G\circ\widetilde{\Psi}\circ w(\ell_{c,d})$:
$$L(G\circ\widetilde{\Psi}\circ w(\ell_{c,d}))=\int_d^\infty\frac{1}{\sqrt{\kappa(c,y)}}\cdot\kappa(c,y)dy=\int_d^\infty\sqrt{\kappa(c,y)}dy.$$
It follows from (\ref{princ}) that
\begin{equation}\label{infinity}L(G\circ\widetilde{\Psi}\circ w(\ell_{c,d}))=\infty.
\end{equation}
But we show that this is a contradiction. Remember that being a conformal covering map,  $G\circ\widetilde{\Psi}\circ\pi_s:\mathbb S^2\setminus(\{n,s\}\cup\pi_s^{-1}(\widetilde{\mathcal{P}}))\rightarrow\mathbb S^2$ extends to a holomorphic map from $\mathbb S^2$ to $\mathbb S^2$. Clearly $\gamma_{c,d}:=\pi_s^{-1}(w(\ell_{c,d}))$ has finite length on $\mathbb S^2$ and $G\circ\widetilde{\Psi}\circ\pi_s(\gamma_{c,d})=G\circ\widetilde{\Psi}\circ w(\ell_{c,d})$. Hence $G\circ\widetilde{\Psi}\circ w(\ell_{c,d})$, being the holomorphic image, should also have  finite length on $\mathbb S^2$, which contradicts (\ref{infinity}). Therefore $L(\widetilde{\Psi}\circ w(\ell_{a,b}))=\infty$ for any $\ell_{a,b}$, as claimed. Similarly, $$L(\widetilde{\Psi}\circ w(\ell^-_{a,b}))=\infty\,\,\,\,{\rm for\,\,\,\, all}\,\,\,\, \ell^-_{a,b}:=\{(a,y):y<b\},$$ where $w(\ell^-_{a,b})$ are the rays approaching $O$ away from $\widetilde{\mathcal{P}}$.

Let $\rho$ be a curve in $\mathbb R^2\setminus({\{O\}\cup\widetilde{\mathcal{P}}})$ ending at a puncture $z_0^*\in{\widetilde{\mathcal{P}}}$. The rectangular coordinates $x_1,x_2,x_3$ of $\mathbb R^3$ are harmonic on $\widetilde{\Sigma}$ and their pull-backs under $\widetilde{\Psi}$, $x_1\circ \widetilde{\Psi},x_2\circ\widetilde{\Psi},x_3\circ\widetilde{\Psi}$, are also harmonic in $\mathbb R^2\setminus({\{O\}\cup\widetilde{\mathcal{P}}})$.  Suppose the length of $\rho$ is finite. Then in a neighborhood of $z_0^*$, $x_1\circ \widetilde{\Psi},x_2\circ\widetilde{\Psi},x_3\circ\widetilde{\Psi}$ are bounded harmonic functions with isolated singularity at $z_0^*$. But this is a removable singularity for these harmonic functions. Hence $\widetilde{\Psi}(z_0^*)$ is a regular point of $\widetilde{\Sigma}$ and so $z_0^*$ cannot be a puncture of $\widetilde{\Psi}$ in $\mathbb R^2\setminus({\{O\}\cup\widetilde{\mathcal{P}}})$.  Therefore $\rho$ must have infinite length.
Thus $\widetilde{\Sigma}$ is complete.
\end{proof}

\begin{lemma}\label{5.5}
Suppose the free boundary minimal annulus $\Sigma$ in a ball of $\mathbb R^3$ is embedded. Then the complete minimal surface $\widetilde{\Sigma}$ that is obtained from $\Sigma$ by applying the spherical reflection infinitely many times is the catenoid.
\end{lemma}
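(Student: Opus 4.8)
The plan is to show that $\widetilde{\Sigma}$ is a complete minimal surface of total curvature exactly $-4\pi$ and then to invoke the classification of such surfaces. First I would record what the preceding lemmas give. By the completeness just established and the fact that the Gauss map $G\circ\widetilde{\Psi}\circ\pi_s$ extends to a holomorphic map $h:\mathbb S^2\to\mathbb S^2$, this $h$ has a finite degree $m=\deg h$. Since the Gauss map of a minimal surface is anticonformal, $-K\,dA$ is (up to the orientation reversal) the pullback under $G$ of the area form of $\mathbb S^2$; integrating over $\widetilde{\Sigma}$ and using that $h$ covers $\mathbb S^2$ exactly $m$ times yields the finite total curvature
$$\int_{\widetilde{\Sigma}}K\,dA=-4\pi m.$$
Everything then reduces to proving $m=1$.

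The heart of the matter is $m=1$, and this is where embeddedness must enter. By Lemma \ref{K} the curvature $K$ is nowhere zero, so $dG\neq0$ on $\widetilde{\Sigma}$ and $h$ can branch only over the images of the punctures $\{n,s\}\cup\pi_s^{-1}(\widetilde{\mathcal P})$, that is, only over the ends of $\widetilde{\Sigma}$. I would argue that embeddedness of $\Sigma$ forces every end of $\widetilde{\Sigma}$ to be embedded, and hence forces $h$ to be unbranched at each puncture as well. Granting this, $h$ is an unbranched holomorphic self-map of $\mathbb S^2$; since $\mathbb S^2$ is simply connected, $h$ is a biholomorphism and $m=1$.

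To establish unbranchedness at the ends I would exploit the free boundary geometry already extracted: $\gamma_1$ and $\gamma_2$ are embedded, strictly convex lines of curvature on $\partial B$ along which $\Sigma$ meets $\partial B$ orthogonally. Along such a convex line of curvature the unit normal turns monotonically through a single loop, so $G|_{\gamma_i}$ is injective with image a simple convex spherical curve. Writing $w=e^{-iz}$ so that $\widetilde{\Sigma}\cong\mathbb C\setminus\{0\}$ with the two ends at $w=0,\infty$ and the curves $\gamma_i$ on the circles $|w|=e^{ka}$, the winding number of $G$ along $\gamma_i$ equals the ramification index of $h$ at the adjacent end, provided no reflected copy contributes an intervening end; injectivity of $G|_{\gamma_i}$ then pins this index to $1$. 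Feeding all ramification indices $e_j=1$ into Riemann--Hurwitz, $2=2m-\sum_j(e_j-1)$, gives $m=1$.

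Finally, $m=1$ yields $\int_{\widetilde{\Sigma}}K\,dA=-4\pi$. By Osserman's classification a complete minimal surface of total curvature $-4\pi$ is either Enneper's surface or the catenoid; since $\widetilde{\Sigma}$ has two ends rather than one, it is the catenoid, and in particular $\widetilde{\mathcal P}=\emptyset$. As $\Sigma$ is exactly the piece of $\widetilde{\Sigma}$ cut out between two consecutive free boundaries inside $B$, it must be the critical catenoid. I expect the main obstacle to be the step asserting that embeddedness of $\Sigma$ makes every end of $\widetilde{\Sigma}$ embedded and rules out interior ends $\widetilde{\mathcal P}$ created by the reflections: embeddedness is a global hypothesis on the compact piece $\Sigma$, whereas the ramification of $h$ is read off at the ends, which are limits of infinitely many reflected copies. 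The delicate point is to show that these copies escape to infinity without the surface doubling back—so that the winding of the normal along the convex boundary genuinely computes the ramification index at the adjacent end—and that no reflected copy develops a puncture at a finite $w\neq 0,\infty$.
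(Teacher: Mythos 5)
Your overall frame (total curvature $-4\pi$ plus Osserman's classification) matches the paper's, but the route you take to degree one has a genuine gap, and it sits exactly where you suspect it does. Two steps fail. First, the assertion that embeddedness of the compact piece $\Sigma$ forces the ends of $\widetilde{\Sigma}$ to be embedded is unjustified: the reflections are abstract analytic continuations, and nothing in the construction prevents distinct reflected copies from intersecting one another or prevents punctures of $\widetilde{\mathcal P}$ from appearing at finite $w$; the paper never proves (nor needs) any embeddedness of $\widetilde{\Sigma}$. Second, even granting embedded ends, the inference ``embedded end $\Rightarrow$ $h$ unbranched at the puncture'' is false: an embedded \emph{planar} end of a finite-total-curvature minimal surface has Gauss map ramification index at least $2$ at the puncture (the middle end of Costa's surface is the standard example), so Riemann--Hurwitz with all $e_j=1$ cannot be invoked this way. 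Your winding-number bridge from $G|_{\gamma_i}$ to the ramification index at the adjacent end collapses for the same reason: between $\gamma_i$ and the end lie infinitely many reflected copies and possibly points of $\widetilde{\mathcal P}$, each of which would contribute to the count, and you have no control over them at this stage. The logic is in effect circular: you need $\widetilde{\mathcal P}=\emptyset$ and unbranched ends to run Riemann--Hurwitz, but those facts are what the degree computation is supposed to deliver.

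The paper closes this hole by making the Gauss map injective on all of $\Sigma$, not merely on $\partial\Sigma$, and then propagating injectivity through the reflections. Concretely: convexity of $\gamma_1,\gamma_2$ gives injectivity of $G|_{\gamma_1}$ and $G|_{\gamma_2}$ (as you also argue); a flux argument (the total flux of $\Sigma$ along $\partial\Sigma$ vanishes) shows $G(\gamma_1)$ and $G(\gamma_2)$ are \emph{disjoint}, so they bound $G(\Sigma)$ and the local diffeomorphism $G$ is injective on the interior of $\Sigma$; then an induction over the exhaustion $\Sigma\subset\widetilde{\Sigma}^2\subset\widetilde{\Sigma}^3\subset\cdots$ yields injectivity of $G$ on every $\widetilde{\Sigma}^n$, hence on $\widetilde{\Sigma}$. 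Injectivity plus completeness gives $\int_{\widetilde{\Sigma}}K=-4\pi$ directly, and only \emph{then} does Osserman's inequality $\int_{\widetilde{\Sigma}}K\le 2\pi(\chi-r)$ force $\chi=0$, $r=2$, and $\widetilde{\mathcal P}=\emptyset$. Note the order of deduction: the absence of extra ends and punctures is an \emph{output} of the curvature identity in the paper, whereas in your proposal it is an unproven \emph{input}. If you want to salvage your argument, the missing ingredients are precisely the paper's flux-based disjointness of the boundary Gauss images and the inductive injectivity through the reflected copies.
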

\begin{proof}
Our proof will use the embeddedness of $\Sigma$ to derive the convexity of $\partial\Sigma$, which will give the injectivity of the Gauss map on $\Sigma$ and then on $\widetilde{\Sigma}$, which will finally show $\int_{\widetilde{\Sigma}}K=-4\pi$.

Recall that $K<0$ on $\widetilde{\Sigma}$ and so the Gauss map $G:\widetilde{\Sigma}\rightarrow\mathbb S^2$ is a local diffeomorphism. Being the free boundaries of $\Sigma$, $\gamma_1$ and $ \gamma_2$ are lines of curvature, hence they are locally strictly convex (in the opposite directions) on the boundary sphere. In fact, $\gamma_1$ and $\gamma_2$ are strictly convex because $\Sigma$ is embedded.  As the sum of the fluxes of $\gamma_1$ and $\gamma_2$ vanish, one can find a great circle  $\Gamma_0$ that separates $\gamma_1$ and $ \gamma_2$. Now we claim that $G|_{\gamma_1}$ and $G|_{\gamma_2}$ are both injective.  Suppose there exist $p_1,p_2\in \gamma_1$  such that $G(p_1)=G(p_2)$. Let $\Gamma_{1}$ be the equator whose north pole is $G(p_1)$. Clearly $\Gamma_{1}$ is tangent to $\gamma_1$ at both $p_1$ and $p_2$. But this contradicts the convexity of $\gamma_1$. Hence $G|_{\gamma_1}$ must be injective. Similarly, $G|_{\gamma_2}$ is also injective. Let's now show that $G(\gamma_1)$ and $G(\gamma_2)$ are disjoint. Suppose $G(p_1)=G(p_2)$ for some $p_i\in\gamma_i$. Then there is a great circle $\Gamma_{1,2}$ which is tangent to $\gamma_i$ at $p_i,\,i=1,2$ such that $\gamma_1$ and $\gamma_2$ lie on the same side of $\Gamma_{1,2}$ because they are convex curves.  Then the total flux of $\Sigma$ along $\partial\Sigma$ has a nonzero projection in the north pole direction with $\Gamma_{1,2}$ as the equator, which is a contradiction because the total flux should vanish. Therefore $G(\gamma_1)$ and $G(\gamma_2)$ are disjoint, and together they bound $G(\Sigma)$. As $G$ is a local diffeomorphism, $G$ must also be injective in the interior of $\Sigma$.

Remember that we performed repeated reflections to obtain $\Sigma\subset\widetilde{\Sigma}^2\subset\widetilde{\Sigma}^3\subset\cdots\subset\widetilde{\Sigma}^n\subset\cdots,\,\,\,\widetilde{\Sigma}^n:=\Psi^n(\mathbb R\times(-(n-1)a,(n-1)a))$. Since the Gauss map $G$ is a local diffeomorphism and is injective on $\Sigma$, we can see by induction that $G$ is also injective on $\widetilde{\Sigma}^2,\widetilde{\Sigma}^3,\cdots,\widetilde{\Sigma}^n$ for any $n$. Hence $G$ is injective on $\widetilde{\Sigma}$.
As $\widetilde{\Sigma}$ is complete and $G$ is a conformal map (against the negative orientation on $\mathbb S^2$), $G(\widetilde{\Sigma})$ should cover $\mathbb S^2$ almost everywhere, and it should cover only once. More precisely, $G(\widetilde{\Sigma})=\mathbb S^2\setminus (\{n,s\}\cup G(\widetilde{\Psi}(\widetilde{{\mathcal{P}}})))$, where $\widetilde{{\mathcal{P}}}$ is the set of punctures of $\widetilde{\Psi}$ in $\mathbb R^2\setminus\{O\}$. Now we apply a well-known theorem of Osserman (Theorem 19, p.136 \cite{La}) that if $\widetilde{\Sigma}$ is a complete minimal surface with Euler characteristic $\chi$ and with $r$ ends in $\mathbb R^3$, then
$$\int_{\widetilde{\Sigma}}K\leq2\pi(\chi-r).$$
But the fact that $G(\widetilde{\Sigma})$ covers $\mathbb S^2$ once almost everywhere implies $\int_{\widetilde{\Sigma}}K=-4\pi$. Hence $\chi=0$ and $r=2$. Therefore $\widetilde{{\mathcal{P}}}$ is an empty set and $\widetilde{\Sigma}$ is an annular surface. It is here that we should remark that the catenoid is the only complete minimal surface that has total curvature $-4\pi$ and is conformal to $\mathbb S^2$ with two punctures (Corollary 22, p.140 \cite{La}). It follows that $\widetilde{\Sigma}$ is the catenoid.
\end{proof}
Lemma \ref{5.5} finally gives the following theorem.

\begin{theorem}\label{main}
Every embedded free boundary minimal annulus in a ball of $\mathbb R^3$ is the critical catenoid.
\end{theorem}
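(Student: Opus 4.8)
The plan is to obtain Theorem \ref{main} as an essentially immediate consequence of Lemma \ref{5.5}, which already carries all the analytic weight. The hypothesis of Theorem \ref{main} is verbatim the hypothesis of Lemma \ref{5.5}: $\Sigma$ is an embedded free boundary minimal annulus in the unit ball $B$. So I would begin by invoking Lemma \ref{5.5} to conclude that the complete minimal surface $\widetilde\Sigma$, obtained from $\Sigma$ by the infinitely iterated spherical reflections of Section 4, is the catenoid. Since by construction $\Sigma=\widetilde\Psi(\mathbb R\times[0,a])$ is a fundamental piece of $\widetilde\Sigma$ bounded by its two free boundary components $\gamma_1,\gamma_2$, it follows at once that $\Sigma$ is a subdomain of a genuine catenoid, and it only remains to identify which piece.

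First I would record which piece it must be. Being free boundary curves along which $\Sigma$ meets the sphere $\partial B$ orthogonally, $\gamma_1$ and $\gamma_2$ are lines of curvature of $\Sigma$ by Joachimsthal's theorem; as the meridians of a catenoid are not closed, the closed lines of curvature $\gamma_1,\gamma_2$ must be circles of latitude. A latitude circle lies on a sphere centered at $O$ only if $O$ lies on the axis through the circle's center, so the center of $B$ must lie on the axis of the catenoid. Normalizing the axis to the $x_3$-axis through $O$ and writing the catenoid as $r=c\cosh((x_3-h)/c)$, the surface $\Sigma$ is then the band between two latitude circles at heights $z_1<z_2$.

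The heart of the remaining argument is to show that this band is the symmetric critical catenoid. Here I would impose the two analytic constraints available at each boundary circle: that it lies on the unit sphere, $\rho_i^2+z_i^2=1$, and that $\Sigma$ meets $\partial B$ orthogonally, which in the meridian half-plane reduces to orthogonality of the catenary and the circle $r^2+z^2=1$, namely $c\cosh v_i=z_i\sinh v_i$ with $v_i=(z_i-h)/c$. Eliminating $h$ between the orthogonality relations at the two circles gives $\coth v_1-v_1=\coth v_2-v_2$, while combining orthogonality with the sphere relation gives $c\cosh^2 v_i=|\sinh v_i|$. A short monotonicity analysis of $v\mapsto\coth v-v$ and $v\mapsto\sinh v/\cosh^2 v$ then shows that the only simultaneous solution is the symmetric one $v_1=-v_2$, $h=0$, with $v_2$ the unique root of $\coth v=v$; equivalently, the flux-balancing identity $\sum_i\int_{\gamma_i}\Psi\,ds=0$ forces the two circles to be mirror images across the neck plane. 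This is precisely the critical catenoid.

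I expect this last step, ruling out an asymmetric catenoid band meeting $\partial B$ orthogonally, to be the only genuine obstacle, since everything prior is either a direct citation of Lemma \ref{5.5} or elementary geometry of surfaces of revolution. If instead one adopts the convention of the introduction that the critical catenoid is by definition any piece of a catenoid meeting the boundary sphere orthogonally, then even this step is superfluous and the theorem is an immediate corollary of Lemma \ref{5.5}; the transcendental computation is then needed only to certify that such a configuration is unique up to rigid motion and scaling.
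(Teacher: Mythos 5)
Your proposal follows the paper's own route exactly: the paper's entire proof of Theorem \ref{main} is the one-line remark that Lemma \ref{5.5} gives it, the identification of the catenoid piece being absorbed into the introduction's definition of the critical catenoid as the part of a catenoid inside the ball meeting the boundary sphere orthogonally. Your supplementary uniqueness analysis of catenoid bands is a reasonable addition (and its conclusion is the classical fact that the symmetric band is the only rotational free boundary annulus), but be aware that $v\mapsto\sinh v/\cosh^2 v$ is not monotone on $(0,\infty)$ (it increases up to $\sinh v=1$ and then decreases), so ruling out asymmetric solutions of the system $\coth v_1-v_1=\coth v_2-v_2$, $c\cosh^2 v_i=|\sinh v_i|$ requires a slightly more careful argument than pure monotonicity, and likewise the flux identity alone is equivalent to the second equation rather than forcing symmetry by itself.
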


\begin{remark}
Fern\'{a}ndez-Hauswirth-Mira  \cite{FHM} have constructed infinitely many immersed free boundary minimal annuli $\Sigma$ in a ball and they have shown that each $\Sigma$ has a complete analytic extension $\widetilde{\Sigma}$ which has infinitely many ends. By Lemma \ref{K}, the repeated reflections of their compact immersed minimal annulus $\Sigma$ will give rise to exactly $\widetilde{\Sigma}$ and the Gaussian curvature vanishes nowhere in $\widetilde{\Sigma}$. Moreover, the ends of $\widetilde{\Sigma}$ will correspond to the punctures of $\widetilde{\Psi}$.
\end{remark}


\begin{thebibliography}{11111}
\setlength{\baselineskip}{0.9\baselineskip}
\bibitem[1]{C} J. Choe, {\it On the analytic extension of a minimal surface},
         Pacific J. Math. {\bf 157} (1993), 29-35.
\bibitem[2]{DHKW} U. Dierkes, S. Hildebrandt, A. K\"uster, O. Wohlrab, {\it Minimal Surfaces I},
Springer-Verlag, Berlin Heidelberg 1992.
\bibitem[3]{FHM} I. Fern\'{a}ndez, L. Hauswirth and P. Mira, {\it Free boundary minimal annuli immersed in the unit ball} preprint,   arXiv:2208.14998v2
\bibitem[4]{FN} A. Fraser and M. Li, {\it Compactness of the space of embedded minimal surfaces with free boundary in three-manifolds with nonnegative Ricci curvature and convex boundary}, J. Differential Geom., {\bf 96} (2014), 183–200.
\bibitem[5]{HV} Dinh Nho H\`ao, Tran Duc Van, {\it Towards the Cauchy problem for the Laplacian}, Partial Differential Equations,  Banach Center Publications, Inst. Math. Polish Acad. Sci. Warszava, Vol. {\bf 27} (1992), 111-128. 
\bibitem[6]{KM} R. Kusner, P. McGrath, {\it On free boundary minimal annuli embedded in the unit ball}, Amer. J. Math., to appear, arXiv:2011.06884
\bibitem[7]{La} H. B. Lawson, Jr., {\it Lectures on minimal submanifolds}, Publish or Perish, Berkeley,
1980.
\bibitem[8]{LY} J. Lee and E. Yeon, {\it A new approach to the Fraser-Li conjecture with the Weierstrass representation formula}, Proc. Amer. Math. Soc. {\bf 149} (2021), 5331–5345.
\bibitem[9]{L} H. Lewy, {\it On the boundary behavior of minimal surfaces}, Proc. Nat. Acad. Sci. USA {\bf 37} (1951), 103–110.
\bibitem[10]{M} P. McGrath, {\it A characterization of the critical catenoid}, Indiana Univ. Math. J. {\bf 67} (2018), 889–897.
\bibitem[11]{N} J. Nitsche, {\it Stationary partitioning of convex bodies}, Arch. Rational Mech. Anal. {\bf 89} (1985), 1-19.
%\bibitem[12]{O} R. Osserman, {\it Global properties of minimal surfaces in $E^3$ and $E^n$}, Ann. Math. {\bf 80} (1964), 340-364.
\bibitem[12]{PT} R. Palais and C.-L. Terng, {\it Critical Point Theory and Submanifold Geometry}, Lecture Notes in Mathematics {\bf 1353}, Springer-Verlag, Berlin 1988.
\bibitem[13]{P} J. Polking, {\it A survey of removable singularities}, Math. Sci. Res. Inst. Publications {\bf 2} (1983), 261-292.
\bibitem[14]{Sc} H.A. Schwarz, {\it \"Uber diejenigen Fälle, in welchen die Gaussische hypergeometrische Reihe eine algebraische Function ihres vierten Elementes darstellt}, J. reine angew. Math. {\bf 75} (1873), 292-335.
\bibitem[15]{S} D.H. Seo, {\it Sufficient symmetry conditions for free boundary minimal annuli to be the critical catenoid}, preprint, arXiv:2112.11877
\end{thebibliography}
\end{document}